\DeclareMathOperator{\Hom}{Hom}
\theoremstyle{plain}
\newtheorem{theorem}{Theorem}[chapter]
\newtheorem*{theorem*}{Theorem}
\newtheorem{proposition}[theorem]{Proposition}
\newtheorem{corollary}[theorem]{Corollary}
\newtheorem{lemma}[theorem]{Lemma}
\newtheorem*{lemma*}{Lemma}
\newtheorem*{namedthm}{\namedthmname}
\newcounter{namedthm}
\theoremstyle{definition}
\newtheorem{definition}[theorem]{Definition}
\theoremstyle{remark}
\newtheorem{example}[theorem]{Example}
\newtheorem{remark}[theorem]{Remark}
\newcommand{\iso}{\cong}
\newcommand{\cat}[1]{\mathscr{#1}} %category variable
\newcommand{\ncat}[1]{\mathbf{#1}} %named category
\newcommand{\ccat}[1]{\mathcal{#1}} % 2-category variable
\newcommand{\dcat}[1]{\mathbb{#1}} %double category variable
\newcommand{\Set}{\ncat{Set}} % 1-cat Set
\newcommand{\dSpan}{\dcat{S}\ncat{pan}} % double cat Span
\newcommand{\Cat}{\ncat{Cat}} % 1-cat Cat
\newcommand{\CCat}{\bm{\ccat{C}}\ncat{at}} % 2-cat Cat
\newcommand{\dProf}{\dcat{P}\ncat{rof}} % double cat Prof(unctors)
\newcommand{\dMod}{\dcat{M}\ncat{od}} % double cat Mod(ules)
\newcommand{\VVer}{\bm{\ccat{V}}\ncat{ert}} % vertical 2-cat
\newcommand{\HHor}{\bm{\ccat{H}}\ncat{or}} % horizontal bicat
\newcommand{\dRel}{\dcat{R}{\ncat{el}}}
\newcommand{\Col}[1]{\langle#1\rangle}
\newcommand{\op}[1]{{#1}^{\text{op}}}
\newcommand{\RMod}{\mathrm{RMod}}
\newcommand{\LMod}{\mathrm{LMod}}
\newcommand{\Bimod}{\mathrm{Bimod}}
\newcommand{\Emb}{\mathrm{Emb}}
\newcommand{\Res}{\mathrm{Res}}
\newcommand{\overbar}[1]{\mkern 1.5mu\overline{\mkern-1.5mu#1\mkern-1.5mu}\mkern 1.5mu}
\newcommand{\tickar}{\includegraphics{}}
\newcommand{\erase}[1]{{}}
\setheadfoot{\onelineskip}{2\onelineskip} % {headheight}{footskip}
\newcommand{\fig}[1]{\vcenter{\hbox{\includegraphics{figures/fig#1}}}}
\title{Regular and exact (virtual) double categories}
\author{Patrick Schultz\thanks{The author was supported by the following grants: Office of Naval
Research ONR N00014131 0260, Air Force Office of Scientific Research AFOSR FA9550--14--1--0031, and
National Aeronautics and Space Administration NASA (Langley Research Center) NNH13ZEA001N--SSAT.}}
\date{\vspace{-3ex}}
\begin{document}
\tightlists
\firmlists

\maketitle

\begin{abstract}
   We propose definitions of regular and exact (virtual) double categories, proving a number of
   results which parallel many basic results in the theory of regular and exact categories. We show
   that any regular virtual double category admits a factorization system which generalizes the
   factorization of a functor between categories into a bijective-on-objects functor followed by a
   fully-faithful functor. Finally, we show that our definition of exact double category is
   equivalent to an axiom proposed by Wood, and very closely related to the ``tight Kleisli
   objects'' studied by Garner and Shulman.
\end{abstract}

\chapter{Introduction}

Category theory, besides having proven itself very generally useful, with examples of categories
arising in most every branch of mathematics, has also proven itself very generalizable. For
instance, enriched categories, internal categories, fibered categories, and quasicategories are some
of the many variations and generalizations of the definition of category which have established
themselves in modern mathematics.

Each of these has a theory which closely parallels that of ordinary categories, with functors and
natural transformations, adjunctions, (weighted) limits and colimits, (pointwise) Kan extensions,
the Yoneda lemma, and so on all playing central roles. It is natural to search for a common
framework in which this body of definitions and results---which we refer to as \emph{formal category
theory}---can be developed once and specialized to each existing and future collection of
``category-like structures''.

The obvious candidate for such a common framework is the theory of \emph{2-categories}, or their
less strict variation, \emph{bicategories}. Every example of ``category-like structures'' can be
assembled into a bicategory, so many people have tried to develop formal category theory at the
level of generality of an arbitrary 2-category.  However, it was quickly apparent that without more
structure, important concepts like weighted limits and colimits and the Yoneda embedding do not have
an adaquate expression.

One proposal for extra structure supporting a robust formal category theory was given by Wood
in~\cite{Wood:1982a,Wood:1985a}. The motivation for his proposal is that, besides functors and
natural transformations, \emph{profunctors} between categories are also a fundamental part of
category theory (though often in the background). Wood defined an extra stucture on a bicategory
$\ccat{B}$, together with a set of axioms,  which ``equips $\ccat{B}$ with abstract proarrows''. We
will refer to this structure as a \emph{proarrow equipment} for short.

In~\cite{Shulman:2008a}, Shulman showed that (pseudo) double categories satisfying a simple property
are essentially equivalent to Wood's proarrow equipments. Shulman called these double categories
\emph{framed bicategories}, though in~\cite{Cruttwell.Shulman:2010a} and elsewhere he has switched
to refering to them simply as \emph{equipments}, which we will do as well. He moreover demonstrated
that the double category formulation of equipments makes clear the ``right'' definitions of functors
and transformations, leading to a well-behaved 2-category of equipments.

In~\cite{Cruttwell.Shulman:2010a}, Cruttwell and Shulman generalized equipments to \emph{virtual
equipments}, which are \emph{virtual double categories} satisfying some simple properties. In a
virtual equipment, composition of proarrows may not exist, yet there is still enough structure to
support the development of formal category theory. They also show that all types of ``generalized
multicategory'', of which the majority of category-like structures are examples, arise as the
objects in the virtual equipment of ``monoids and modules`` in some virtual equipment.  Thus we can
see that, just as most known types of algebraic or geometric structure can be assembled into a
category, most known types of category-like structures (and more besides) can be assembled into a
virtual equipment.

\plainbreak{1}

In classical category theory, there is a hierarchy of additional properties a category $\cat{C}$
might have, beginning with $\cat{C}$ simply having finite limits, and culminating with $\cat{C}$
being a Grothendieck topos.  The higher up this hierarchy $\cat{C}$ is, the more ``set-like'' it is.
Some of the intermediate levels in this hierarchy are regular, exact, coherent, and extensive
categories, and pretoposes.

In this paper, we propose a beginning to an analogous hierarchy of additional properties on a
virtual equipment. The higher up this hierarchy a virtual equipment $\dcat{D}$ is, the more
properties it shares with categories and profunctors, and hence the more elements of formal category
theory it should be possible to interpret inside $\dcat{D}$. In particular, we propose in this paper
definitions of \emph{regular virtual equipment} and of \emph{exact virtual equipment}.

In Section~\ref{ch:review}, we review the definitions of (virtual) double category and (virtual)
equipment, as well as the construction of monoids and modules in a virtual double category. In
Section~\ref{ch:collapse}, we define the ``collapse'' of a monoid, which plays a role in the
theory analogous to coequalizers in the theory of regular and exact categories, and which is closely
related to the Kleisli object of a monad.

Section~\ref{ch:regular} gives the definition of regular virtual equipment and proves some basic
results which parallel the typical exposition of regular categories. In particular, we show that
just as every regular category has a factorization system generalizing the epi/mono image
factorization in $\Set$, every regular virtual equipment has a factorization system generalizing the
bijective-on-objects/fully-faithful image factorization in $\Cat$.

Lastly, Section~\ref{ch:exact} gives the definition of exact virtual equipment. The main result in
this section is that exactness in our sense is essentially equivalent to Wood's ``Axiom 5''
from~\cite{Wood:1985a}. This axiom, and the closely related ``tight Kleisli objects''
from~\cite{Garner.Shulman:2013a}, involves Kleisli objects and Eilenberg-Moore objects for monads in
the bicategory of proarrows. While those papers clearly show that this is an important construction
for formal category theory, it always felt to the author to be counter to Shulman's ``philosophy
that the [proarrows] are not `morphisms', but rather objects in their own
right''~\cite{Shulman:2008a}. The definition of exact virtual equipment gives an equivalent
condition which we feel adheres to this philosophy, and establishes a tight analogy with a large
body of classical category theory which we hope will stimulate further work in this direction.

\plainbreak{1}

The author would like to thank Mike Shulman for helpful conversations, as well as David Spivak for
helpful conversations and feedback on drafts of this paper.

\subsubsection{Notational conventions}

This paper deals with categories, 2-categories/bicategories, and (virtual) double categories, and so
it is helpful to establish a notational convention to keep straight the various structures. In this
paper, we write category variables $\cat{C}$ in a caligraphic font (except when working inside the
equipment $\dProf$, where it would be distracting), while we write named categories such as $\Set$
and $\Cat$ in a bold roman font. 2-categories and bicategories such as $\CCat$ we write with a
script-style first letter, and bicategory variables $\ccat{B}$ similarly. Double categories and
virtual double categories we write with the first letter in a blackboard font: $\dcat{D}$, $\dProf$.

\chapter{(Virtual) double categories and equipments}\label{ch:review}

We begin by recalling some definitions from~\cite{Shulman:2008a,Cruttwell.Shulman:2010a} which are
at the center of the present paper.

\begin{definition}\label{def:virtual_double_category}
   A \emph{virtual double category} $\dcat{D}$ consists of the following data:
   \begin{compactitem}
      \item A category $\dcat{D}_0$, which we refer to as the \emph{vertical category} of
         $\dcat{D}$. For any two objects $c,d\in\dcat{D}_0$, we will write
         $\dcat{D}(c,d)=\dcat{D}_0(c,d)$ for the set of vertical arrows from $c$ to $d$.
      \item For any two objects $c,d\in\dcat{D}_0$, a set of horizontal arrows, which we refer to as
         proarrows and draw with a slash: $c\tickar d$.
      \item 2-cells, which have the shape
         \begin{equation}\label{eq:virtual_2-cell}
            \fig{1}
         \end{equation}
         for any $n\geq 0$. We will call $f$ and $g$ the \emph{left frame} and \emph{right frame} of
         $\phi$, and call the string $A_1,\dots,A_n$ the (multi-)source and $B$ the target of
         $\phi$. We will write ${}_f\dcat{D}_g(A_1,\dots,A_n;B)$ for the set of all cells of
         shape~\eqref{eq:virtual_2-cell} in $\dcat{D}$, and we write $\dcat{D}(A_1,\dots,A_n;B)$ for
         the set of cells with $f$ and $g$ identities.
      \item For each proarrow $A\colon c\tickar d$ there is an identity 2-cell
         \begin{equation*}
            \fig{2}
         \end{equation*}
      \item Composition of 2-cells is like composition in a multicategory. So given the 2-cell
         $\phi$ in~\eqref{eq:virtual_2-cell} and $n$ other 2-cells with horizontal targets
         $A_1,\dots,A_n$, there is a composite 2-cell with the evident shape. This composition
         operation satisfies unit and associativity axioms like in a multicategory.
   \end{compactitem}
\end{definition}

We will now introduce the primary running examples of this paper.

\begin{example}
   There is a virtual double category $\dRel$ with vertical category $\dRel_0=\Set$, and with
   proarrows $R\colon a\tickar b$ given by relations $R\subseteq b\times a$. There is a 2-cell of
   the form~\eqref{eq:virtual_2-cell} if and only if for every tuple $(x_0,\dots,x_n)\in
   c_0\times\dots\times c_n$, the implication
   \begin{equation*}
      A_1(x_1,x_0)\wedge\dots\wedge A_n(x_n,x_{n-1}) \Rightarrow B(g(x_n),f(x_0)).
   \end{equation*}
   holds.
\end{example}

\begin{example}
   There is a virtual double category $\dProf$ with vertical category $\dProf_0=\Cat$, and with
   proarrows $P\colon C\tickar D$ given by profunctors $P\colon\op{D}\times
   C\to \Set$. Given an element $x\in P(d,c)$ and morphisms $f\colon c\to c'$ in $C$ and
   $g\colon d'\to d$ in $D$, we will write the functorial action as $P(g,f)(x)=f\cdot x\cdot
   g$.

   A 2-cell of the form
   \begin{equation}
      \fig{3}
   \end{equation}
   is a family of functions $P_1(c_1,c_0)\times\dots\times P_n(c_n,c_{n-1})\to Q(Gc_n,Fc_0)$ for
   each tuple of objects $(c_0,\dots,c_n)\in C_0\times\dots\times C_n$, which is natural in each of
   the $C_i$. For $C_0$, naturality means for each $f\colon c_0\to c'_0$ and each
   $(x_1,\dots,x_n)\in P_1(c_1,c_0)\times\dots\times P_n(c_n,c_{n-1})$, we have
   $\phi(f\cdot x_1,x_1,\dots,x_n)=F(f)\cdot\phi(x_1,\dots,x_n)$, while naturality in $C_1$ means
   for each $g\colon c_1\to c'_1$ we have $\phi(x_1\cdot g,x_2,\dots,x_n)=\phi(x_1,g\cdot
   x_2,\dots,x_n)$, and similarly for $C_2,\dots,C_n$.
\end{example}

\begin{definition}\label{def:monoids_and_modules}
   Let $\dcat{D}$ be a virtual double category. The virtual double category $\dMod(\dcat{D})$ of
   \emph{monoids and modules} is defined as follows:
   \begin{compactitem}
      \item The objects are \emph{monoids} in $\dcat{D}$: tuples $(c,M,e_M,m_M)$ consisting of an
         object $c$ of $\dcat{D}$, a proarrow $M\colon c\tickar c$, and unit and
         multiplication cells
         \begin{equation*}
            %\fig{4}\qquad\fig{5}
            \vcenter{\hbox{\includegraphics{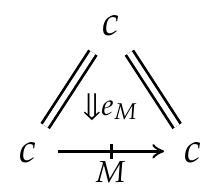}\hspace{4em}\includegraphics{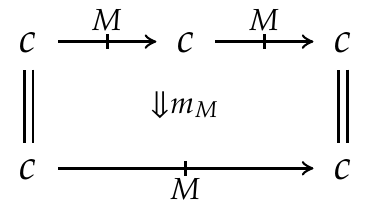}}}
         \end{equation*}
         satisfying the evident unit and associativity axioms.
      \item The vertical arrows are \emph{monoid homomorphisms}: pairs $(f,\vec{f}\,)$ of a vertical arrow
         $f\colon c\to d$ in $\dcat{D}$ and a cell
         \begin{equation*}
            \fig{6}
         \end{equation*}
         which respects the unit and multiplication cells of $M$ and $N$.
      \item The proarrows $B\colon M\tickar N$ are \emph{bimodules}: triples $(B,l_B,r_B)$
         consisting of a proarrow $B\colon c\tickar d$ in $\dcat{D}$ and cells
         \begin{equation*}
            \vcenter{\hbox{\includegraphics{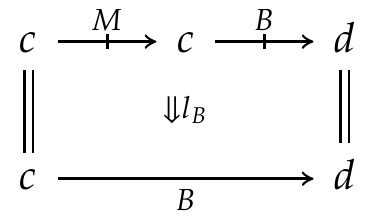}\hspace{3em}\includegraphics{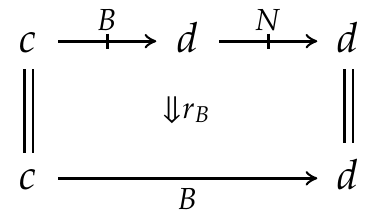}}}
         \end{equation*}
         satisfying evident monoid action axioms.
      \item The 2-cells are \emph{bimodule homomorphisms}: cells in $\dcat{D}$
         \begin{equation*}
            \fig{9}
         \end{equation*}
         which are compatible with the left and right actions of the bimodules.
   \end{compactitem}
\end{definition}

\begin{remark}
   Given two monoids $M\colon c\tickar c$ and $N\colon d\tickar d$ in a virtual double category
   $\dcat{D}$, we will write ${}_M\Bimod_N$ for the \emph{category} of $(M,N)$-bimodules,
   i.e.~proarrows $M\tickar N$ in $\dMod(\dcat{D})$.
\end{remark}

In a multicategory, tensor products of objects can be captured via a universal property. In this
way, monoidal categories are equivalent to multicategories in which the tensor product of any list
of objects exists. Similarly, composition of proarrows in a virtual double category can be captured
by a universal property, and virtual double categories in which all such composites exist are
equivalent to double categories. Note that in this paper, as in \cite{Shulman:2008a}, double
categories are always assumed to be \emph{pseudo double categories}, in which composition in the
vertical direction is strictly associative and unital, and in which composition in the horizontal
direction is associative and unital only up to coherent isomorphism.

\begin{definition}
   A cell
   \begin{equation}\label{eq:opcart_cell}
      \fig{10}
   \end{equation}
   in a virtual double category is said to be \emph{opcartesian} if any cell
   \begin{equation*}
      \fig{11}
   \end{equation*}
   factors through it uniquely as
   \begin{equation*}
      \fig{12}
   \end{equation*}
   Thus a cell of the form~\eqref{eq:opcart_cell} is opcartesian precisely if composition with it
   induces a bijection
   \begin{equation*}
      {}_f\dcat{D}_g(R_1,\dots,R_m,Q,S_1,\dots,S_k;T)\iso{}_f\dcat{D}_g(R_1,\dots,R_m,P_1,\dots,P_n,S_1,\dots,S_k;T)
   \end{equation*}
   for any $f$, $g$, $T$, $R_1,\dots,R_m$, and $S_1,\dots,S_k$.

   Whenever an opcartesian cell~\eqref{eq:opcart_cell} exists, we will refer to $Q$ as \emph{the
   composite} of the $P_i$'s, and write it as $P_1\odot\cdots\odot P_n$. In the $n=0$ case, if there
   is an opcartesian cell of the form
   \begin{equation*}
      \fig{13}
   \end{equation*}
   we say that $c$ \emph{has a unit} $U_c$. When clear from context, we will often write $c$ for the
   unit proarrow $U_c$. Likewise, for any vertical arrow $f\colon c\to d$ we will often write $f$
   for the unit 2-cell
   \begin{equation*}
      \fig{14}
   \end{equation*}
   which is induced by $f$ using the universal property of the units.
\end{definition}

\begin{definition}
   Say that a virtual double category $\dcat{D}$ \emph{has units} if every object has a unit. Say
   that $\dcat{D}$ \emph{has composites} if every string of $n\geq 0$ composable proarrows has a composite.
\end{definition}

\begin{definition}
   If a virtual double category $\dcat{D}$ has units, then we can define a \emph{vertical 2-category}
   $\VVer(\dcat{D})$. The objects and morphisms of $\VVer(\dcat{D})$ are the objects and vertical
   arrows of $\dcat{D}$, while for any pair of morphisms $f,g\colon c\to d$, the 2-cells $\phi\colon
   f\Rightarrow g$ are defined to be 2-cells in $\dcat{D}$ of the form
   \begin{equation*}
      \fig{15}
   \end{equation*}

   If $\dcat{D}$ has \emph{all} composites, then we can also define a \emph{horizontal bicategory}
   $\HHor(\dcat{D})$. The objects and morphisms of $\HHor(\dcat{D})$ are the objects and proarrows
   of $\dcat{D}$, and the 2-cells are the 2-cells of $\dcat{D}$ with identity left and right frames.
   The bicategory axioms follow from the universal property of the composites.

   Even if $\dcat{D}$ does not have all composites, we will sometimes abuse notation by writing
   $\HHor(\dcat{D})(c,d)$ for the category of proarrows $c\tickar d$.
\end{definition}

\begin{example}
   The virtual double category $\dRel$ has composites. For any set $A$, the unit relation $A\colon
   A\tickar A$ is simply the equality relation: $A(a_1,a_2)\Leftrightarrow a_1=a_2$. For any
   composable pair of relations $R\colon A\tickar B$, $S\colon B\tickar C$, the composite is the
   usual composition of relations:
   \begin{equation*}
      (R\odot S)(c,a) \Leftrightarrow \exists b\in B.\; R(b,a)\wedge S(c,b)
   \end{equation*}
\end{example}

\begin{example}
   The virtual double category $\dProf$ has composities as well. For any category $C$, the
   unit profunctor $C\colon C\tickar C$ is the hom profunctor $\op{C}\times C\to\Set$, thus
   $C(c_1,c_2)=\Hom_C(c_1,c_2)$. For any composable pair of profunctors $P\colon C\tickar D$ and
   $Q\colon D\tickar E$, the composite can be defined as a coend
   \begin{equation*}
      (P\odot Q)(e,c) = \int^{d\in D} P(d,c)\times Q(e,d).
   \end{equation*}
   This coend can be equivalently constructed as a quotient of $\Pi_{d\in D} (P(d,c)\times Q(e,d))$,
   where for any $f\colon d\to d'$ in $D$ and any $p\in P(d',c)$ and $q\in Q(e,d)$, we identity
   $(p\cdot f,q)$ and $(p,f\cdot q)$. In this way, profunctor composition can be seen as analogous
   to the tensor product of bimodules. This analogy between categories/profunctors and
   rings/bimodules is a very fruitful one, and in fact by generalizing to enriched categories, rings
   and bimodules can be seen as a special case of enriched categories and profunctors.
\end{example}

\begin{example}
   For any virtual double category $\dcat{D}$, the virtual double category $\dMod(\dcat{D})$ will
   always have units, though does not have all composites in general. For any monoid $(c,M)$, it is
   not hard to see that the unit bimodule is simply $M\colon c\tickar c$ regarded as a
   $(M,M)$-bimodule. In~\cite{Shulman:2008a} it is shown that if $\dcat{D}$ has composites, and has
   local reflexive coequalizers which are preserved under composition, then $\dMod(\dcat{D})$ has
   composites.
\end{example}

\begin{definition}
   A cell
   \begin{equation}\label{eq:cart_cell}
      \fig{16}
   \end{equation}
   in a virtual double category is said to be \emph{cartesian} if any cell
   \begin{equation*}
      \fig{17}
   \end{equation*}
   factors through it uniquely as
   \begin{equation*}
      \fig{18}
   \end{equation*}
   Thus a cell of the form~\eqref{eq:cart_cell} is cartesian precisely if composition with it
   induces a bijection
   \begin{equation*}
      {}_h\dcat{D}_k(R_1,\dots,R_n;P)\iso{}_{fh}\dcat{D}_{gk}(R_1,\dots,R_n;Q)
   \end{equation*}
   for any $h$, $k$, and $R_1,\dots,R_n$.

   When a cartesian cell of the form~\eqref{eq:cart_cell} exists, we say that $P$ is (isomorphic to)
   the \emph{restriction} of $Q$ along $f$ and $g$, written $Q(g,f)$. We say that a virtual double
   category \emph{has restrictions} if $Q(g,f)$ exists for all compatible $Q$, $f$, and $g$.
\end{definition}

\begin{definition}
   A \emph{virtual equipment} $\dcat{D}$ is a virtual double category which has units and
   restrictions. If $\dcat{D}$ has all composites, hence is a double category, we will call
   $\dcat{D}$ an \emph{equipment} (called a \emph{framed bicategory} in~\cite{Shulman:2008a}).
\end{definition}

\begin{example}
   $\dRel$ is an equipment: given functions $f\colon A\to B$ and $g\colon C\to D$, and a relation
   $R\colon B\tickar D$, the restriction is given by $R(g,f)(c,a)\Leftrightarrow R(g(c),f(a))$.

   $\dProf$ is also an equipment: given functors $F\colon A\to B$ and $G\colon C\to D$, and a
   profunctor $P\colon B\tickar D$, the restriction is given by $P(G,F)(c,a)=P(Gc,Fa)$. In other
   words, $P(G,F)$ is the composition
   \begin{equation*}
      \fig{19}
   \end{equation*}

   If $\dcat{D}$ is a virtual equipment, then so is $\dMod(\dcat{D})$.
   See~\cite{Cruttwell.Shulman:2010a} for details.
\end{example}

\begin{example}\label{ex:Prof_is_ModSpan}
   Let $\cat{C}$ be a category with pullbacks. There is an equipment $\dSpan(\cat{C})$ whose
   vertical category is $\cat{C}$, and whose proarrows $S\colon c\tickar d$ are spans $d\leftarrow
   S\rightarrow c$. Composition of spans is formed by pullback, and the 2-cells are the evident
   thing.

   In~\cite{Shulman:2008a,Cruttwell.Shulman:2010a} it is shown that $\dMod(\dSpan(\cat{C}))$ is the
   equipment of categories, functors, and profunctors \emph{internal} to $\cat{C}$. In particular,
   $\dProf=\dMod(\dSpan(\Set))$.
\end{example}

\begin{remark}
   Any vertical arrow $f\colon c\to d$ in a virtual equipment gives rise to the two proarrows
   $d(1,f)\colon c\tickar d$ and $d(f,1)\colon d\tickar c$, formed by restricting the unit proarrow
   on $d$ along $f$ on one side and an identity on the other. We will call proarrows of this form
   \emph{representable}.

   Representable proarrows play a special role in the theory. For instance, in~\cite{Shulman:2008a}
   it is shown that if a double category has restrictions of this special form, then it in fact has
   \emph{all} restrictions.  The same is not true for virtual double categories, but the following
   proposition shows that, assuming all restrictions exist, then all restrictions can be recovered
   by composition with representable proarrows. For this reason, representable proarrows are also
   often called \emph{base change objects}.
\end{remark}

\begin{example}
   In $\dProf$, a profunctor $P\colon 1\tickar C$ is precisely a presheaf on $C$, while a functor
   $x\colon 1\to C$ is just an object of $C$. In this case, $P$ is representable by the functor $x$
   if $P\iso C(1,x)$, i.e.~if for every object $y\in C$, $P(y)\iso C(y,x)$. This is the motivation
   for the term \emph{representable profunctor}.
\end{example}

\begin{proposition}\label{prop:equipment_2-cell_bijection}
   Let $P\colon c\tickar d$ be a proarrow and $f\colon a\to c$ and $g\colon b\to d$ be vertical
   arrows in a virtual equipment. Then the composite $C(1,f)\odot P\odot B(g,1)$ exists and is
   isomorphic to $P(g,f)$.

   Moreover, there is a bijection between cells of the form
   \begin{equation*}
      \vcenter{\hbox{\includegraphics{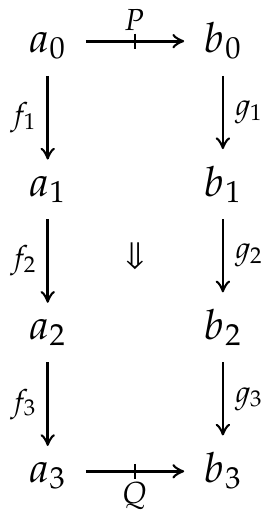}}}
      \vcenter{\hbox{\hspace{2em}and\hspace{2em}}}
      \vcenter{\hbox{\includegraphics{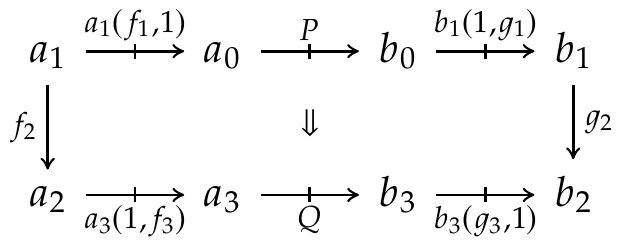}}}
   \end{equation*}
\end{proposition}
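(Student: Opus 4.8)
The plan is to prove the two claims in sequence, using the composite-versus-restriction interplay together with the defining universal properties.

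\medskip

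\noindent\textbf{Step 1: the composite $C(1,f)\odot P\odot B(g,1)$ exists and equals $P(g,f)$.} Recall that $C(1,f)$ is by definition the restriction $U_c(1,f)$ of the unit on $c$, and $B(g,1)=U_d(g,1)$. Both of these are representable proarrows, and each comes equipped with a cartesian cell. I would first establish the special case $n=1$, $g=1$: I claim the cartesian cell exhibiting $P(1,f)$ also exhibits $P(1,f)$ as the composite $C(1,f)\odot P$. Concretely, paste the cartesian cell $C(1,f)\to U_c$ (with left frame $f$, right frame $1$) onto $P$ to get a cell $C(1,f),P\To P$ with left frame $f$; I must check this pasting is opcartesian, i.e.\ that composing with it gives a bijection ${}_h\dcat{D}_k(R_1,\dots,P(1,f)\text{-ish},\dots;T)\iso\dots$. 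This follows by a two-step adjustment: first use the cartesian universal property of the $C(1,f)$-cell to trade a leg framed by $f$ for an unframed leg into $U_c$, then use the unit property of $U_c$ (units are nullary opcartesian cells) to absorb it. Dually, $P\odot B(g,1)\iso P(g,1)$ using the unit on $d$. Composing the two gives $C(1,f)\odot P\odot B(g,1)\iso P(g,f)$, where I also use that restriction is functorial, $P(g,f)\iso\bigl(P(g,1)\bigr)(1,f)$, which itself follows from pasting cartesian cells (pasting of cartesian cells is cartesian).

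\medskip

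\noindent\textbf{Step 2: the 2-cell bijection.} Here \texttt{fig20} is a cell with multi-source $(A_1,\dots,A_n)$, target $P$, and left/right frames $f\circ(-)$, $g\circ(-)$ built from $a,b$; \texttt{fig21} is the cell with the same multi-source but target $P(g,f)$ and frames adjusted so that $a,b$ are now \emph{inside}. The bijection is exactly the cartesian universal property of the cartesian cell $P(g,f)\To P$ (with left frame $f$, right frame $g$): Definition of cartesian says composition with it induces ${}_h\dcat{D}_k(R_1,\dots,R_n;P(g,f))\iso{}_{fh}\dcat{D}_{gk}(R_1,\dots,R_n;P)$ for all $h,k,R_i$. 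Taking $h,k$ to be the appropriate vertical arrows $a\to?$, $b\to?$ and the $R_i$ to be the $A_i$ gives precisely the claimed bijection. So this part is essentially immediate once the statement is unwound; I would just need to match the picture \texttt{fig20}/\texttt{fig21} to the source/target data in Definition of the cartesian cell.

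\medskip

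\noindent\textbf{Main obstacle.} The genuine work is Step 1, and specifically the claim that pasting a cartesian representable cell onto $P$ yields an \emph{opcartesian} cell --- cartesianness and opcartesianness are a priori unrelated, so this needs the representable structure, not just any cartesian cell. The clean way to see it is to verify the bijection on hom-sets directly: a cell out of $C(1,f),P,B(g,1)$ corresponds, by the two cartesian universal properties applied on the two ends, to a cell out of $U_c,P,U_d$ (with the frames $f,g$ moved to the outside), and then by the two unit (nullary-opcartesian) universal properties to a cell out of $P$ alone with outer frames $f,g$ --- which is the same data a cartesian cell $P(g,f)\To P$ classifies. Thus $C(1,f)\odot P\odot B(g,1)$ and $P(g,f)$ corepresent the same functor, hence are isomorphic. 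One has to be a little careful that the adjustments of left/right frames compose correctly and that associativity of the ternary composite is handled (e.g.\ by first forming $C(1,f)\odot P$, then composing with $B(g,1)$, invoking that a composite of opcartesian cells is opcartesian); but these are routine given the multicategory-style axioms in Definition~\ref{def:virtual_double_category}.
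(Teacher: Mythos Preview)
The paper does not actually prove this proposition; it is stated without proof, with only a remark afterward about how to read the right-hand picture. This is a standard result, proven in the references the paper cites (Shulman's framed-bicategories paper and Cruttwell--Shulman), so the author is simply quoting it.

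Your argument is the standard one and is correct in substance. Step~2 is exactly right: the bijection is nothing more than the defining universal property of the cartesian cell for $P(g,f)$. For Step~1, your ``Main obstacle'' paragraph contains the clean argument and is correct: one shows directly that cells with multi-source $(\dots,C(1,f),P,B(g,1),\dots)$ are in natural bijection with cells with multi-source $(\dots,P(g,f),\dots)$, by applying the cartesian properties of the representables on each end and then the nullary-opcartesian property of the units $U_c$ and $U_d$. This exhibits $P(g,f)$ as the composite. One small wobble: in the first paragraph of Step~1 you speak of checking that the pasted cell ``$C(1,f),P\To P$ with left frame $f$'' is opcartesian, but as the paper defines it, opcartesian cells have identity vertical frames; what you really want is the cell $C(1,f),P\To P(1,f)$ with identity frames obtained by factoring through the cartesian cell for $P(1,f)$, and it is \emph{that} cell whose opcartesianness you verify. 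Your ``Main obstacle'' paragraph already says this correctly, so the issue is only in the earlier exposition.
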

Note that we can make sense of the cell on the right because the composition of the proarrows
along the bottom exists (and is isomorphic to $Q(g_3,f_3)$). We draw it this way to make the
symmetry clear.

\chapter{Collapse}\label{ch:collapse}

In this section we will introduce a central concept of this paper: the collapse of a monoid or
bimodule in a virtual equipment. This can be seen as a generalization both of the Kleisli object of
a monad in a bicategory, and of the quotient of a relation in a category. It is essentially the same
as the ``tight Kleisli objects'' considered in~\cite{Garner.Shulman:2013a}, though they worked in a
slightly more general context.

\begin{definition}\label{def:embedding}
   An \emph{embedding} of a monoid $M\colon c\tickar c$ in a virtual equipment into an object $x$ is
   a monoid homomorphism (Definition~\ref{def:monoids_and_modules}) $(f,\vec{f}\,)$ from $M$ to the
   trivial monoid on $x$:
   \begin{equation*}
      \fig{22}
   \end{equation*}
   We will sometimes write an embedding as $(f,\vec{f}\,)\colon(c,M)\to x$, or even just $f\colon M\to
   x$ when clear from context. We will write $\Emb(M,x)$ for the set of embeddings from $M$ to $x$.

   Likewise, an \emph{embedding} of a $(M,N)$-bimodule $B$ into a proarrow $P\colon x\tickar y$
   consists of monoid embeddings $f\colon M\to x$ and $g\colon N\to y$, and a bimodule homomorphism
   from $B$ to $P$, regarding $P$ as a bimodule between the trivial monoids on $x$ and $y$:
   \begin{equation}\label{eq:bimod_embedding}
      \fig{23}
   \end{equation}
   We will sometimes write such a bimodule embedding as ${}_f\phi_g\colon{}_M B_N\to P$, and we will
   write ${}_f\Emb_g(B,P)$ for the set of all such embeddings, for fixed embeddings $f\colon M\to x$
   and $g\colon N\to y$.

   Say an embedding~\eqref{eq:bimod_embedding} is \emph{cartesian} if $\vec{f}$, $\vec{g}$, and
   $\phi$ are all cartesian cells.
\end{definition}

\begin{example}
   A monoid $R\colon a\tickar a$ in $\dRel$ is precisely a reflexive transitive relation on the set
   $a$. An embedding $(f,\vec{f})\colon R\to x$ is a commutative diagram
   \begin{equation*}
      \fig{24}
   \end{equation*}
   or equivalently, a ``fork''
   \begin{equation*}
      \fig{25}
   \end{equation*}
   i.e.~a function $f\colon a\to x$ such that $fp_1=fp_2 \; (=\vec{f})$.
\end{example}

We leave the proof of the following easy observation to the reader.

\begin{lemma}\label{lem:restriction_bimodule}
   Given embeddings $f\colon M\to x$ and $g\colon N\to y$ in a virtual equipment $\dcat{D}$, and a
   cartesian cell
   \begin{equation*}
      \fig{26}
   \end{equation*}
   there is a unique $(M,N)$-bimodule structure on $P(g,f)$ making $\phi$ an embedding.

   For any $B\in{}_M\Bimod_N$ and any proarrow $P\in\HHor(\dcat{D})(x,y)$, this construction induces
   a bijection ${}_f\Emb_g(B,P)\iso{}_M\Bimod_N(B,P(g,f))$, which is natural in $B$ and $P$.
\end{lemma}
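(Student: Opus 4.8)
The plan is to analyze the cartesian cell $\phi\colon P(g,f)\to P$ and transport the bimodule structure on $P$ (viewed as a bimodule between trivial monoids) along it, using the universal property of cartesian cells to produce the required action cells. For the first assertion, I would proceed as follows. We are given the embeddings $f\colon M\to x$ and $g\colon N\to y$, which by Definition~\ref{def:embedding} consist of vertical arrows $f\colon c\to x$, $g\colon d\to y$ together with monoid-homomorphism cells $\vec f$, $\vec g$ into the trivial monoids on $x$ and $y$. The proarrow $P\colon x\tickar y$ carries a canonical $(U_x,U_y)$-bimodule structure, namely its left and right unit 2-cells $l_P$, $r_P$ coming from the unit proarrows. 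To build the left $M$-action on $P(g,f)$, I would form the pasting of $m$ (the action we want), which should be a cell $M\odot P(g,f)\to P(g,f)$; I construct the cell whose target is $P$ directly by pasting $\vec f$ (turning $M$ into $U_x$ along $f$), the cartesian cell $\phi$, and the left unit action $l_P$ of $P$, obtaining a cell $M, P(g,f)\Rightarrow P$ over frames $f\cdot(\text{stuff})$ — and then invoke the cartesian universal property of $\phi$ (which by the displayed bijection ${}_h\dcat{D}_k(R_1,\dots,R_n;P(g,f))\iso{}_{fh}\dcat{D}_{gk}(R_1,\dots,R_n;P)$) to factor it uniquely through $\phi$, yielding $l_{P(g,f)}$. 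The right action $r_{P(g,f)}$ is constructed symmetrically using $\vec g$ and $r_P$.

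The bulk of the verification is then checking the bimodule action axioms — unit compatibility and associativity of the left action, likewise for the right, and the commuting-actions axiom — and checking that $\phi$ becomes a bimodule homomorphism, i.e.\ an embedding. For each of these, I would paste together the defining equations and then appeal to the \emph{uniqueness} clause of the cartesian universal property of $\phi$: two cells into $P(g,f)$ agree as soon as their composites with $\phi$ agree, and the composites with $\phi$ agree because $P$'s own bimodule axioms hold together with the monoid-homomorphism axioms satisfied by $\vec f$ and $\vec g$. This reduces every axiom to a routine diagram chase downstairs in $\dcat{D}$. Uniqueness of the bimodule structure itself is immediate from the same principle: any bimodule structure on $P(g,f)$ making $\phi$ an embedding must have actions whose composites with $\phi$ equal the pasted cells above, hence must coincide with the constructed ones.

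For the claimed bijection ${}_f\Emb_g(B,P)\iso{}_M\Bimod_N(B,P(g,f))$, I would send a bimodule homomorphism $\psi\colon B\to P(g,f)$ to the composite $\phi\circ\psi\colon B\to P$, which is an embedding over $f,g$ precisely because $\phi$ is an embedding and homomorphisms compose; conversely, given an embedding ${}_f\psi_g\colon {}_M B_N\to P$, the cartesian property of $\phi$ produces a unique cell $\tilde\psi\colon B\to P(g,f)$ with $\phi\circ\tilde\psi=\psi$, and one checks $\tilde\psi$ respects the actions — again by post-composing with $\phi$ and using uniqueness. These two assignments are mutually inverse by the uniqueness in the cartesian property. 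Naturality in $B$ and in $P$ is a straightforward consequence of the functoriality of composition of cells. The main obstacle is purely bookkeeping: keeping the frames and the domains straight when pasting $\vec f$, $\vec g$, $\phi$, and the unit/action cells of $P$, so that the cartesian factorization is applicable at each step; there is no conceptual difficulty, which is why the paper leaves it to the reader.
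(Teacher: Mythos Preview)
Your proposal is correct and is exactly the routine argument the paper has in mind; the paper explicitly omits the proof (``We leave the proof of the following easy observation to the reader''), and your construction---building the actions by pasting $\vec f$, $\phi$, and the unit cell of $P$, then invoking the cartesian universal property for existence, uniqueness, and the axiom checks---is the intended one. The only cosmetic point is that what you call ``the left unit action $l_P$'' is really just the canonical cell $U_x,P\Rightarrow P$ coming from the opcartesian unit, since $P$ carries no nontrivial bimodule structure over the trivial monoids; with that understood, your sketch goes through without change.
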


\begin{definition}
   Lemma~\ref{lem:restriction_bimodule} determines a functor
   \begin{equation*}
      {}_f\Res_g\colon\HHor(\dcat{D})(x,y)\to{}_M\Bimod_N
   \end{equation*}
   for any pair of embeddings $f\colon M\to x$ and $g\colon N\to y$. Thus for any $P\colon x\tickar
   y$, ${}_f\Res_g(P)$ is defined to be $P(g,f)$ with the unique $(M,N)$-bimodule structure making
   the cartesian cell an embedding.
\end{definition}

\begin{definition}\label{def:monoid_collapse}
   Let $M\colon c\tickar c$ be a monoid in a virtual equipment $\dcat{D}$. A \emph{collapse} of $M$
   is a universal embedding of $M$. That is, a collapse of $M$ is an object $\Col{M}$ together with
   an embedding
   \begin{equation}\label{eq:collapse_diagram}
      \fig{27}
   \end{equation}
   such that any other embedding factors uniquely through $\vec{\imath}_M$:
   \begin{equation*}
      \vcenter{\hbox{\includegraphics{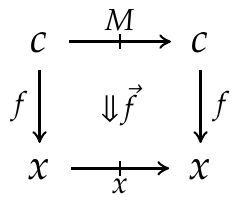}}}
      \vcenter{\hbox{\hspace{1.5em}=\hspace{1.5em}}}
      \vcenter{\hbox{\includegraphics{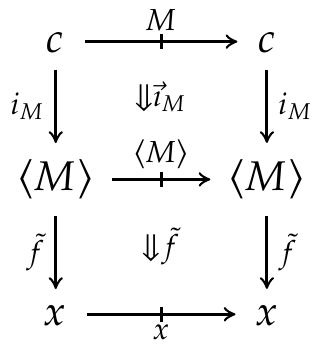}}}
   \end{equation*}
   In other words, $\Col{M}$ represents the functor $\dcat{D}_0\to\Set$ sending $x$ to $\Emb(M,x)$.
\end{definition}

\begin{example}
   Given a monoid $R\colon a\tickar a$ in $\dRel$, i.e.~a (reflexive, transitive) relation on a, the
   collapse of $R$ is a universal fork $R\rightrightarrows a\to x$, that is, a coequalizer of
   $(p_1,p_2)$.
\end{example}

\begin{example}\label{ex:collapse_in_prof}
   A monoid $M\colon C\tickar C$ in $\dProf$ is a profunctor $M\colon\op{C}\times C\to \Set$ with a
   unit and multiplication. The unit amounts to a function from morphisms $f\colon c\to d$ in $C$ to
   elements $e(f)\in M(c,d)$ of $M$, which is compatible with the functorial action on $M$ in that
   $h\cdot e(f)=e(h\circ f)$ and $e(f)\cdot g=e(f\circ g)$ whenever these make sense.

   The multiplication is an operation which, given elements $m_1\in M(c,d)$ and $m_2\in M(d,e)$,
   assigns an element $m_2\bullet m_1\in M(c,e)$. This operation must be compatible with the
   functorial action, meaning $(f\cdot m_2)\bullet m_1=f\cdot(m_2\bullet m_1)$, $(m_2\cdot g)\bullet
   m_1=m_2\bullet(g\cdot m_1)$, and $m_2\bullet(m_1\cdot h)=(m_2\bullet m_1)\cdot h$, whenever these
   make sense, and it must satisfy unit and associativity axioms: $e(f)\bullet m=f\cdot m$,
   $m\bullet e(g)=m\cdot g$, and $(m_3\bullet m_2)\bullet m_1=m_3\bullet(m_2\bullet m_1)$.

   The collapse of a monoid $M$ is a category $\Col{M}$ with the objects of $C$, and with hom sets
   $\Hom_{\Col{M}}(c,d)=M(c,d)$. The multiplication of $M$ defines the composition of $\Col{M}$,
   while the functorial action of $C$ on $M$ defines the identity-on-objects functor $i_M\colon
   C\to\Col{M}$.
\end{example}

\begin{example}\label{ex:collapse_in_ModD}
   More generally (see Example~\ref{ex:Prof_is_ModSpan}), we can form the collapse of any monoid in
   $\dMod(\dcat{D})$, for any virtual equipment $\dcat{D}$. We will sketch how this works,
   leaving the routine verifications to the reader.

   Suppose $M\colon c\tickar c$ is a monoid in $\dcat{D}$, i.e.~an object
   $(c,M)\in\dMod(\dcat{D})$, and let $N\colon(c,M)\tickar(c,M)$ be a monoid in $\dMod(\dcat{D})$.
   This means $N$ is a $(M,M)$-bimodule, together with unit and multiplication bimodule
   homomorphisms.

   The collapse of $N$ will be $N$ itself, forgetting the bimodule structure, but remembering the
   monoid structure. In particular, the unit of the collapse is the composition of the unit
   $e_M\colon c\to M$ of $M$ and the unit $\eta_N\colon M\to N$ of $N$, while the multiplication of
   the collapse is simply the multiplication of $N$.

   The collapse map $i\colon (c,M)\to(c,N)$ is the unit $\eta_N\colon M\to N$, and $\vec{\imath}$ is
   the identity on $N$.
\end{example}

\begin{definition}\label{def:bimodule_collapse}
   Let $M\colon c\tickar c$ and $N\colon d\tickar d$ be monoids in a virtual equipment such that the
   collapses $\Col{M}$ and $\Col{N}$ exist, and let $B\colon c\tickar d$ be a $(M,N)$-bimodule. A
   \emph{collapse} of $B$ is a universal embedding of $B$: a proarrow
   $\Col{B}\colon\Col{M}\tickar\Col{N}$ together with an embedding
   \begin{equation}\label{eq:bimodule_collapse}
      \fig{30}
   \end{equation}
   such that any other embedding factors uniquely through $i_B$:
   \begin{equation}
      \vcenter{\hbox{\includegraphics{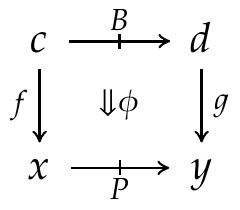}}}
      \vcenter{\hbox{\quad = \quad}}
      \vcenter{\hbox{\includegraphics{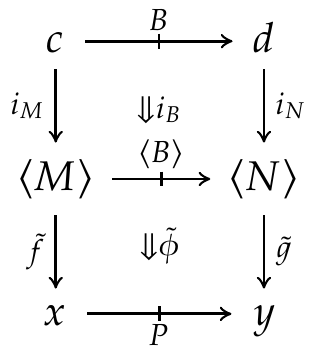}}}
   \end{equation}

   In other words, composition with $i_B$ induces a bijection
   \begin{equation*}
      {}_{\tilde{f}}\dcat{D}_{\tilde{g}}(\Col{B};\textrm{--})\iso{}_{\tilde{f}i_M}\Emb_{\tilde{g}i_N}(B,\textrm{--}).
   \end{equation*}
\end{definition}

\begin{remark}
   When it is not clear from context, we will speak of a ``monoid collapse'' or a ``bimodule
   collapse'' to specify which of Definitions~\ref{def:monoid_collapse} or
   \ref{def:bimodule_collapse} is meant.
\end{remark}

\begin{proposition}\label{prop:bimodule_collapse_characterization}
   Consider a cell in a virtual double category
   \begin{equation*}
      \fig{33}
   \end{equation*}
   where $B$ is a $(M,N)$-bimodule, $i_M\colon M\to\Col{M}$ and $i_N\colon N\to\Col{N}$ are
   collapse embeddings, and ${}_{i_M}\phi_{i_N}\colon{}_MB_N\to P$ is a bimodule embedding. The
   following are equivalent:
   \begin{compactenum}
      \item The embedding $\phi$ is a bimodule collapse.
      \item Composition with $\phi$ induces a bijection $\dcat{D}(P;\textrm{--})\iso
         {}_{i_M}\Emb_{i_N}(B,\textrm{--})$
      \item Composition with $\phi$ induces a bijection $\dcat{D}(P;\textrm{--})\iso
         {}_M\Bimod_N(B,{}_{i_M}\Res_{i_N}(\textrm{--}))$.
   \end{compactenum}
\end{proposition}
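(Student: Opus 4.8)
The plan is to establish $(2)\Leftrightarrow(3)$ directly from Lemma~\ref{lem:restriction_bimodule}, and then to obtain $(1)\Leftrightarrow(2)$ by reducing the ``arbitrary frames'' universal property in Definition~\ref{def:bimodule_collapse} to its ``identity frames'' special case by restricting along the frames. For $(2)\Leftrightarrow(3)$ I would apply Lemma~\ref{lem:restriction_bimodule} with the embeddings $f=i_M$ and $g=i_N$: it supplies a bijection ${}_{i_M}\Emb_{i_N}(B,Q)\iso{}_M\Bimod_N(B,{}_{i_M}\Res_{i_N}(Q))$ natural in $Q\colon\Col{M}\tickar\Col{N}$, and naturality carries the ``post-compose with $\phi$'' map of~(2) onto the one of~(3), so either is bijective exactly when the other is. The implication $(1)\Rightarrow(2)$ is then immediate, being the special case $\tilde f=\id_{\Col{M}}$, $\tilde g=\id_{\Col{N}}$ of Definition~\ref{def:bimodule_collapse}.

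The substance is in $(2)\Rightarrow(1)$. Here I would fix $\tilde f\colon\Col{M}\to x$, $\tilde g\colon\Col{N}\to y$, and $Q\colon x\tickar y$, and exhibit post-composition with $\phi$ as the composite of natural bijections
\[
   {}_{\tilde f}\dcat{D}_{\tilde g}(P;Q)\;\iso\;\dcat{D}\bigl(P;Q(\tilde g,\tilde f)\bigr)\;\iso\;{}_{i_M}\Emb_{i_N}\bigl(B,Q(\tilde g,\tilde f)\bigr)\;\iso\;{}_{\tilde f i_M}\Emb_{\tilde g i_N}(B,Q),
\]
where the first bijection is the universal property of the cartesian cell exhibiting the restriction $Q(\tilde g,\tilde f)$, the second is hypothesis~(2) applied to $Q(\tilde g,\tilde f)\colon\Col{M}\tickar\Col{N}$, and the third says that a bimodule embedding of $B$ into $Q(\tilde g,\tilde f)$ is the same datum as a bimodule embedding of $B$ into $Q$ along the composite frames $\tilde f i_M$ and $\tilde g i_N$. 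A short diagram chase, using that any $\psi$ is recovered by recomposing its factorization through the cartesian cell with that cartesian cell, identifies the total composite with post-composition by $\phi$; this is exactly the defining property of a bimodule collapse (Definition~\ref{def:bimodule_collapse}) for $P$ and $\phi$.

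The step I expect to cost real work is the third bijection above. I would deduce it from Lemma~\ref{lem:restriction_bimodule} by identifying the $(M,N)$-bimodules ${}_{i_M}\Res_{i_N}(Q(\tilde g,\tilde f))$ and ${}_{\tilde f i_M}\Res_{\tilde g i_N}(Q)$: their underlying proarrows agree because restrictions compose, i.e.\ $Q(\tilde g,\tilde f)(i_N,i_M)\iso Q(\tilde g i_N,\tilde f i_M)$ with the composite of the two cartesian cells again cartesian; and the two induced bimodule structures then agree by the uniqueness clause of Lemma~\ref{lem:restriction_bimodule}, since that composite cartesian cell, equipped with the frames $\tilde f i_M$ and $\tilde g i_N$, is an embedding relative to both structures, which are therefore equal. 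Once this compatibility of $\Res$ with composition of restrictions is in hand, the rest is formal bookkeeping with cartesian cells and multicategory-style composition of $2$-cells.
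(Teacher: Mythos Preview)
Your proposal is correct and follows essentially the same approach as the paper: the paper also derives $(2)\Leftrightarrow(3)$ from Lemma~\ref{lem:restriction_bimodule}, notes $(1)\Rightarrow(2)$ is immediate, and proves $(2)\Rightarrow(1)$ via exactly your chain of bijections
\[
   {}_f\dcat{D}_g(P;Q)\iso\dcat{D}(P;Q(g,f))\iso{}_{i_M}\Emb_{i_N}(B,Q(g,f))\iso{}_{fi_M}\Emb_{gi_N}(B,Q).
\]
Your explicit justification of the third bijection via composability of restrictions and the uniqueness clause of Lemma~\ref{lem:restriction_bimodule} is more detailed than what the paper writes, but it is the intended argument.
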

\begin{proof}
   2 and 3 are clearly equivalent by Lemma~\ref{lem:restriction_bimodule}, and 2 easily follows
   from 1.

   To see $2\Rightarrow 1$, we have the chain of equivalences, for any $f\colon\Col{M}\to x$,
   $g\colon\Col{N}\to y$, and $Q\colon x\tickar y$,
   \begin{equation*}
      {}_f\dcat{D}_g(P;Q) \iso \dcat{D}(P;Q(g,f)) \iso {}_{i_M}\Emb_{i_N}(B,Q(g,f))
         \iso{}_{fi_M}\Emb_{gi_N}(B,Q)
   \end{equation*}
\end{proof}

\begin{definition}\label{def:normal_collapse}
   We will call the diagram \eqref{eq:collapse_diagram} a \emph{normal collapse} if it also
   exhibits $\Col{M}$ as the bimodule collapse of the unit $(M,M)$-bimodule $M\colon c\tickar c$.
\end{definition}

\chapter{Regular virtual double categories}\label{ch:regular}

Let $\cat{C}$ be a category with finite limits, and let $f\colon c\to d$ be a morphism in $\cat{C}$.
Recall the following standard definitions (see e.g.~\cite{Borceux:1994b}, \cite{Bourn.Gran:2004a}):
\begin{compactitem}
   \item The \emph{kernel pair} of $f$ is the pair $p_1,p_2\colon R\rightrightarrows c$ given by the
      pullback
      \begin{equation*}
         \fig{34}
      \end{equation*}
      A kernel pair is always an internal equivalence relation: that is $(p_1,p_2)\colon R\to
      c\times c$ is a monomorphism ($R$ is a relation), there exists a common section $c\to R$ of
      $p_1$ and $p_2$ ($R$ is reflexive), and $R$ is similarly transitive and symmetric.
   \item An equivalence relation $p_1,p_2\colon R\rightrightarrows c$ is called \emph{effective} if
      it is the kernel pair of some morphism.
   \item $f$ is a \emph{regular epimorphism} if it is the coequalizer of some parallel pair of
      arrows.
   \item $\cat{C}$ is a \emph{regular category} if every effective equivalence relation has a
      coequalizer, and if regular epimorphisms are stable under pullback.
\end{compactitem}

In a regular category $\cat{C}$, any morphism factors uniquely as a regular epimorphism followed by
a monomorphism. In fact, a category is regular if and only if it has a such a factorization system
and the regular epimorphisms are stable under pullback. In that way, regular categories are
precisely those with ``well-behaved'' image factorizations.

Another common description of regular categories is that they are precisely those with a ``good''
theory of internal relations. In particular, we have the following construction.

\begin{definition}
   For any regular category $\cat{C}$, we can define an equipment $\dRel(\cat{C})$ as follows:
   \begin{compactitem}
      \item The vertical category of $\dRel(\cat{C})$ is $\cat{C}$.
      \item Proarrows $R\colon a\tickar b$ are relations, i.e.~monomorphisms $R\hookrightarrow b\times
         a$.
      \item 2-cells
         \begin{equation}\label{eq:rel_2-cella}
            \fig{35}
         \end{equation}
         are commutative diagrams
         \begin{equation}\label{eq:rel_2-cellb}
            \fig{36}
         \end{equation}
         In particular, note that for any square of shape~\eqref{eq:rel_2-cella}, there is at most
         one 2-cell $\phi$ of that shape. We say that $\dRel(\cat{C})$ is ``locally posetal''.

         The 2-cell~\eqref{eq:rel_2-cella} is cartesian if and only if~\eqref{eq:rel_2-cellb} is a
         pullback.
      \item The unit relation $a\tickar a$ is the diagonal $\Delta\colon a\hookrightarrow a\times
         a$. The composition $R\odot S$ of two relations $R\colon a\tickar b$ and $S\colon b\tickar
         c$ is formed using pullbacks and the epi-mono factorization, as follows:
         \begin{equation*}
            \fig{37}
         \end{equation*}
   \end{compactitem}
\end{definition}

In this section, we will propose a definition of \emph{regular virtual equipment}.
In~\ref{sec:regular_equipments_def} we begin with the definition and some preliminary results and
examples, and in~\ref{sec:factorization} we give a generalization of the epi-mono factorization
present in any regular category.

\section{Definition and basic properties}\label{sec:regular_equipments_def}

\begin{definition}
   Let $f\colon c\to d$ be a vertical arrow in a virtual equipment.  The \emph{kernel} of $f$ is
   defined to be the monoid obtained by restricting the trivial monoid on $d$:
   \begin{equation}\label{eq:kernel_diagram}
      \fig{38}
   \end{equation}
   So the multiplication $\mu$ is the unique 2-cell satisfying
   \begin{equation}
      \vcenter{\hbox{\includegraphics{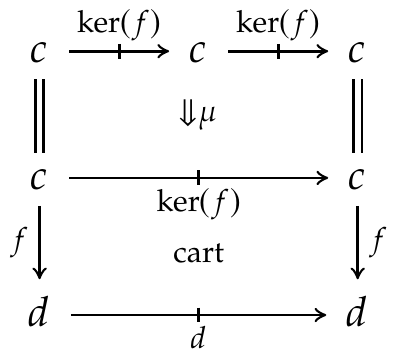}}}
      \vcenter{\hbox{\quad = \quad}}
      \vcenter{\hbox{\includegraphics{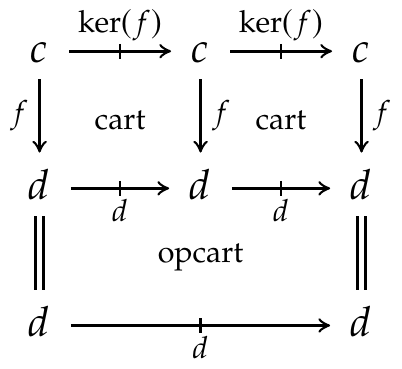}}}
   \end{equation}
   and similarly for the unit $\eta$.
\end{definition}

\begin{definition}
   Let $f\colon c\to d$ be a vertical arrow in a virtual equipment.  Say that $f$ is an
   \emph{inclusion} if the unit 2-cell on $f$ is cartesian, or equivalently if $\ker(f)$ is the
   trivial monoid on $c$. We will denote inclusions by $f\colon c\rightarrowtail d$.
\end{definition}

\begin{definition}\label{def:effective}
   Say that a monoid $M\colon c\tickar c$ in a virtual equipment is \emph{effective} if $M$ is the
   kernel of some vertical arrow.

   Similarly, say that a $(M,N)$-bimodule $B\colon c\tickar d$ is
   effective if $M$ and $N$ are effective, with $M\iso\ker(f)$ and $N\iso\ker(g)$ for some $f\colon
   c\to c'$ and $g\colon d\to d'$, and there exists a proarrow $P\colon c'\tickar d'$ such that
   $B\iso{}_f\Res_g(P)$. Equivalently, $B$ is effective if there exists a cartesian embedding
   $B\to P$ for some $P$.
\end{definition}

\begin{definition}
   Let $f\colon c\to d$ be a vertical arrow in a virtual equipment.  Say that $f$ is a
   \emph{regular cover} if the restriction~\eqref{eq:kernel_diagram} is a normal collapse cell.
   We will denote regular covers by $f\colon c\twoheadrightarrow d$.
\end{definition}

\begin{example}
   The inclusions in $\dRel(\cat{C})$ are precisely the monomorphisms of $\cat{C}$, and the regular
   covers are the regular epimorphisms.

   The inclusions in $\dProf$ are the fully-faithful functors, and the regular covers are those
   functors which are bijective on objects.
\end{example}

\begin{definition}\label{def:regular_equipment}
   Say that a virtual equipment $\dcat{D}$ is \emph{regular} if
   \begin{compactenum}
      \item \label{def:regular-ker_has_collapse}
         every effective monoid has a normal collapse,
      \item \label{def:regular-bimodule_collapse}
         for every proarrow $B\colon d\tickar d'$ and regular covers $f\colon
         c\twoheadrightarrow d$ and $g\colon c'\twoheadrightarrow d'$, the cartesian embedding
         \begin{equation*}
            \fig{41}
         \end{equation*}
         is a bimodule collapse cell.
   \end{compactenum}
\end{definition}

The equipments $\dRel$ and $\dProf$ are both regular. In fact, the next two propositions show that
most ``Rel-like'' and ``Prof-like'' (virtual) equipments will be regular.
(See~\cite{Cruttwell.Shulman:2010a} for an exhibition of some of the many examples of familiar
structures arising as $\dMod(\dcat{D})$ for some $\dcat{D}$.)

\begin{proposition}\label{prop:Rel_is_regular}
   For any regular category $\cat{C}$, the equipment $\dRel(\cat{C})$ is regular.
\end{proposition}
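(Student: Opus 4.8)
The plan is to verify the two axioms of Definition~\ref{def:regular_equipment} directly, using the concrete description of $\dRel(\cat{C})$ together with the standard facts about regular categories recalled at the start of this section. The key translation is the identification, already noted in the excerpt, of monoids in $\dRel(\cat{C})$ with reflexive transitive relations, of effective monoids with kernel pairs (equivalently, effective equivalence relations, once one checks that a kernel in the equipment sense coincides with the kernel pair), of inclusions with monomorphisms, and of regular covers with regular epimorphisms. Most of the work is bookkeeping to see that the abstract universal properties (normal collapse, bimodule collapse) unwind to the familiar universal properties (coequalizer, pullback-stable image factorization) in $\cat{C}$.

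First I would verify axiom~\ref{def:regular-ker_has_collapse}. Given an effective monoid $M\colon c\tickar c$, by definition $M\iso\ker(f)$ for some $f\colon c\to d$; unwinding the kernel restriction~\eqref{eq:kernel_diagram} in $\dRel(\cat{C})$ shows $M$ is the kernel pair relation $R\hookrightarrow c\times c$ of $f$, which is an effective equivalence relation. Since $\cat{C}$ is regular, the pair $p_1,p_2\colon R\rightrightarrows c$ has a coequalizer $q\colon c\twoheadrightarrow \Col{M}$. I would then check that $\Col{M}$ together with the induced embedding is a collapse: an embedding of $M$ into an object $x$ is, by the example in Section~\ref{ch:collapse}, exactly a fork $R\rightrightarrows c\to x$, so the universal property of the coequalizer gives exactly the universal property in Definition~\ref{def:monoid_collapse}. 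Finally, to see this collapse is \emph{normal}, I would check that the same cell exhibits $\Col{M}$ as the bimodule collapse of the unit $(M,M)$-bimodule $M$; here one uses that $\dRel(\cat{C})$ is locally posetal, so the bimodule collapse condition (Proposition~\ref{prop:bimodule_collapse_characterization}, part 3) reduces to an inclusion of relations over $\Col{M}\times\Col{M}$, and one identifies both the unit bimodule restricted along $q$ on both sides and the candidate with the diagonal — i.e.\ one shows the image of $R$ under $q\times q$ is the diagonal of $\Col{M}$, which holds because $q$ coequalizes $p_1,p_2$ and is a regular (hence strong) epi.

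Next I would verify axiom~\ref{def:regular-bimodule_collapse}. Given a relation $B\colon d\tickar d'$ (a mono $B\hookrightarrow d'\times d$) and regular covers, i.e.\ regular epis, $f\colon c\twoheadrightarrow d$ and $g\colon c'\twoheadrightarrow d'$, the cartesian embedding in the statement is, by the description of cartesian cells in $\dRel(\cat{C})$, the pullback of $B$ along $f\times g$, which is the relation $B(g,f)\hookrightarrow c'\times c$ defined by $B(g,f)(c',c)\Leftrightarrow B(g(c'),f(c))$. I must show this pullback square exhibits $B$ as the bimodule collapse of $B(g,f)$. Again using local posetality and Proposition~\ref{prop:bimodule_collapse_characterization}(3), this amounts to showing that for every relation $Q\colon x\tickar y$ (with the appropriate frames) a bimodule map $B(g,f)\to {}_{i}\Res_{i}(Q)$ corresponds to a unique 2-cell $B\to Q$ — i.e.\ that $B$ is the image of $B(g,f)$ under $f\times g$. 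Concretely: because $f$ and $g$ are regular epis, so is $f\times g$ (regular epis are stable under pullback, hence under products in a regular category), and a regular epi $e\colon P\to B'$ together with the pullback $P = B'\times_{d'\times d}(\text{its domain})$ exhibits $B'$ as the image of the pullback; the pullback-stability of regular epis is exactly what makes the image factorization interact correctly with the base change. So the content of this axiom for $\dRel(\cat{C})$ is precisely that regular epis are pullback-stable, which is part of the definition of a regular category.

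The main obstacle I anticipate is not any single hard argument but the careful matching of the abstract 2-cell universal properties against the concrete diagrammatic ones — in particular, verifying that the equipment-theoretic ``kernel'' of $f$ really is the kernel pair (so that ``effective monoid'' means ``effective equivalence relation''), and pinning down exactly which pullback/image diagram the normality and bimodule-collapse conditions demand. Once the dictionary is fixed, both axioms reduce to: (i) effective equivalence relations have coequalizers, and (ii) regular epis are pullback-stable — which are the two defining clauses of a regular category. I would organize the proof around establishing that dictionary cleanly, then invoke these two facts.
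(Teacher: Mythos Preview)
Your approach is essentially the same as the paper's: identify kernels with kernel pairs, collapses with coequalizers, and reduce the two axioms to the two clauses defining a regular category. One organizational difference is that the paper first isolates a clean characterization --- a 2-cell in $\dRel(\cat{C})$ is a bimodule collapse if and only if its left frame, right frame, and the map $\phi$ are all regular epimorphisms --- and then uses this uniformly for both normality and axiom~\ref{def:regular-bimodule_collapse}; you instead unwind each case separately via image language, which is fine but slightly more ad hoc.

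There is one small gap in your normality argument. You say the image of $R$ under $q\times q$ is the diagonal ``because $q$ coequalizes $p_1,p_2$ and is a regular epi''. The first fact gives containment in the diagonal, but to get the whole diagonal you need the composite $R\to c\to\Col{M}$ to be a regular epi, and for that you should use that $p_1$ (or $p_2$) is a \emph{split} epi by reflexivity of $R$, hence a regular epi, and that regular epis compose in a regular category. The paper makes exactly this argument explicitly; once you add that step your proof is complete.
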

\begin{proof}
   The kernel of any vertical morphism $f\colon a\tickar b$ is precisely the kernel pair of $f$,
   considered as an internal reflexive transitive relation $\ker(f)\colon a\tickar a$. The collapse
   of $\ker(f)$ exists because $\cat{C}$ has coequalizers of kernel pairs.

   It is not hard to check that a 2-cell~\eqref{eq:rel_2-cellb} is a bimodule collapse if and only
   if $f$, $g$, and $\phi$ are all regular epimorphisms (hint: use the orthogonality of monos and
   regular epis).

   Suppose $R\colon a\tickar a$ is an effective monoid/relation, with collapse/coequalizer $i\colon
   a\to\Col{R}$. Then in the collapse cell
   \begin{equation*}
      \fig{42}
   \end{equation*}
   we can see that $\vec{\imath}$ is a regular epimorphism as follows: $p_1$ and $p_2$ are split
   epis since $R$ is reflixive, and in a regular category every split epi is a regular epi; $i$
   is a regular epi by definition; and $\vec{\imath}=ip_1\;(=ip_2)$ is a regular epi because regular
   epis are closed under composition. Hence the collapse is normal.

   Finally, part~\ref{def:regular-bimodule_collapse} of Definition~\ref{def:regular_equipment}
   follows because regular epis are closed under product and pullback.
\end{proof}

\begin{proposition}\label{prop:ModD_is_regular}
   For any virtual equipment $\dcat{D}$, the virtual equipment $\dMod(\dcat{D})$ is regular.
\end{proposition}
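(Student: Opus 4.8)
The plan is to verify the two axioms of Definition~\ref{def:regular_equipment} directly, using the explicit description of collapses in $\dMod(\dcat{D})$ from Example~\ref{ex:collapse_in_ModD}. The key observation is that in $\dMod(\dcat{D})$, collapses are ``free'': the collapse of a monoid $N$ in $\dMod(\dcat{D})$ over an object $(c,M)$ is $(c,N)$ itself, with collapse map the unit $\eta_N\colon M\to N$, and $\vec{\imath}$ the identity on $N$. So collapses always exist, and the content of the proposition is entirely about checking normality and the bimodule collapse condition.

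First I would unwind what the kernel of a vertical arrow in $\dMod(\dcat{D})$ looks like. A vertical arrow is a monoid homomorphism $(f,\vec f\,)\colon(c,M)\to(d,N)$; its kernel is the monoid obtained by restricting the trivial monoid on $(d,N)$, i.e.\ $N(f,f)$ with its induced $(M,M)$-bimodule structure and monoid structure. By Example~\ref{ex:collapse_in_ModD} (applied with the monoid $N(f,f)$ in $\dMod(\dcat{D})$ over $(c,M)$), the collapse of this kernel exists. For axiom~\ref{def:regular-ker_has_collapse} I must check the collapse is \emph{normal}, i.e.\ that the collapse cell also exhibits $\Col{\ker(f,\vec f\,)}$ as the bimodule collapse of the unit bimodule. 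Here I would use Proposition~\ref{prop:bimodule_collapse_characterization}: it suffices to check that composition with the collapse cell induces a bijection onto embeddings of the unit bimodule, which reduces to a computation with the explicit free collapse and the universal property of restrictions (Proposition~\ref{prop:equipment_2-cell_bijection}) in $\dMod(\dcat{D})$.

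For axiom~\ref{def:regular-bimodule_collapse}, I need to identify the regular covers in $\dMod(\dcat{D})$. A vertical arrow $(f,\vec f\,)$ is a regular cover precisely when the kernel restriction cell is a normal collapse; unwinding through Example~\ref{ex:collapse_in_ModD} again, this should force $(f,\vec f\,)$ to be, up to the relevant universal properties, essentially the unit of the monoid $N$ viewed as a bimodule over $M$ — in other words, the ``identity-on-objects''-type maps. Given regular covers $f\colon (c,M)\twoheadrightarrow(d,N)$ and $g\colon(c',M')\twoheadrightarrow(d',N')$ and a bimodule $B$ over the trivial monoids on $(d,N),(d',N')$ — which is just a proarrow $B\colon d\tickar d'$ in $\dMod(\dcat{D})$, i.e.\ an $(N,N')$-bimodule in $\dcat{D}$ — the cartesian embedding $\,{}_f\Res_g(B)\to B$ is ${}_f B_g$ with its induced structure, and I must show it is a bimodule collapse. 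Again via Proposition~\ref{prop:bimodule_collapse_characterization} this reduces to showing composition with the embedding induces the right bijection, which follows by combining the cartesian property of the embedding with the characterization of regular covers just obtained and the fact that restriction of bimodules along embeddings is the functor ${}_f\Res_g$ of Lemma~\ref{lem:restriction_bimodule}.

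The main obstacle I anticipate is bookkeeping rather than any deep difficulty: $\dMod(\dcat{D})$ is a two-layered structure (monoids over monoids), so every ``object'' and ``proarrow'' in the statement is already a bimodule-with-extra-structure in $\dcat{D}$, and keeping the universal properties straight across the two layers — especially matching up ``embedding into the trivial monoid of $(d,N)$'' with data in $\dcat{D}$ — requires care. The cleanest route is probably to prove a lemma identifying regular covers in $\dMod(\dcat{D})$ explicitly (as the arrows whose underlying cell data is, up to iso, a collapse unit), and then feed that, together with Proposition~\ref{prop:bimodule_collapse_characterization}, into both axioms; after that, everything is a diagram chase using the universal properties already established. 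I expect no use of any special hypothesis on $\dcat{D}$ beyond it being a virtual equipment, consistent with the fact that $\dMod(\dcat{D})$ is always a virtual equipment.
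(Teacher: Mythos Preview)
Your proposal is correct and follows essentially the same route as the paper's own (sketched) proof: use Example~\ref{ex:collapse_in_ModD} for existence of collapses, identify the regular covers in $\dMod(\dcat{D})$ explicitly, and then verify the bimodule collapse condition via the universal properties. Two small differences are worth noting. First, the paper observes that the collapse cell $\vec{\imath}$ has underlying cell the identity on $N$, hence is \emph{cartesian}, and then invokes Lemma~\ref{lem:collapse_normal_cartesian} (which only needs axiom~\ref{def:regular-bimodule_collapse}) to conclude normality---this is a cleaner shortcut than verifying normality directly via Proposition~\ref{prop:bimodule_collapse_characterization} as you propose. Second, the paper nails down the regular covers sharply: $(f,\vec f\,)$ is a regular cover iff $f$ is an isomorphism in $\dcat{D}_0$, and for such $f,g$ a cell is a bimodule collapse iff it is cartesian iff its underlying cell in $\dcat{D}$ is an isomorphism; your ``identity-on-objects-type maps'' is the right intuition but you should aim for this precise statement, as it makes axiom~\ref{def:regular-bimodule_collapse} immediate.
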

\begin{proof}
   We will provide a sketch, leaving the many straightforward but tedious verifications to the
   reader.

   We saw in Example~\ref{ex:collapse_in_ModD} that in fact \emph{every} monoid $N\colon
   (c,M)\tickar (c,M)$ in $\dMod(\dcat{D})$ has a collapse. It is not hard to see that
   $\vec{\imath}_M$ is a cartesian cell, as its underlying cell in $\dcat{D}$ is the identity on
   $N$. We will see in Lemma~\ref{lem:collapse_normal_cartesian} that $\vec{\imath}_M$ being
   cartesian implies that the collapse is normal. Thus axiom 1 holds.

   To verify axiom 2, we claim that a vertical morphism $(f,\vec{f})\colon (c,M)\to (d,N)$ is a regular cover if and
   only if $f$ is an isomorphism, and that for any regular covers $f$ and $g$, a 2-cell
   $\phi\in{}_f\dMod(\dcat{D})_g(B,B')$ is a bimodule collapse if and only if it is cartesian, if
   and only if the underlying 2-cell in $\dcat{D}$ is an isomorphism.
\end{proof}

\begin{proposition}\label{prop:regular_alt_axiom}
   Condition~\ref{def:regular-bimodule_collapse} of Definition~\ref{def:regular_equipment} is
   equivalent to the following:
   \begin{compactenum}[1'.]
      \setcounter{enumi}{1}
      \item for every pair of regular covers $f\colon c\twoheadrightarrow d$ and $g\colon
         c'\twoheadrightarrow d'$, the functor
         \begin{equation*}
            {}_f\Res_g\colon\HHor(\dcat{D})(d,d')\to{}_{\ker(f)}\Bimod_{\ker(g)}
         \end{equation*}
         is fully-faithful.
   \end{compactenum}
\end{proposition}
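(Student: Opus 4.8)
The plan is to show that condition~\ref{def:regular-bimodule_collapse} and condition $1'$ are really two ways of expressing the same universal property, by unwinding what each says about composition with a cell and then invoking Proposition~\ref{prop:bimodule_collapse_characterization}. First I would fix regular covers $f\colon c\twoheadrightarrow d$ and $g\colon c'\twoheadrightarrow d'$, write $M=\ker(f)$ and $N=\ker(g)$, and note that by the very definition of a regular cover the restriction cells exhibiting $M$ and $N$ as kernels are normal collapse cells; in particular $i_M$ and $i_N$ may be taken to be $f$ and $g$ themselves, with $\Col{M}=d$ and $\Col{N}=d'$. Then for a fixed proarrow $B\colon d\tickar d'$, the cartesian embedding in~\ref{def:regular-bimodule_collapse} is, by Lemma~\ref{lem:restriction_bimodule}, precisely the component at $B$ of the functor ${}_f\Res_g$ together with the universal cartesian cell, and it realizes ${}_f\Res_g(B)={}_fB_g\to B$ as a cartesian embedding of the $(M,N)$-bimodule ${}_f\Res_g(B)$ into $B$.

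Next I would apply Proposition~\ref{prop:bimodule_collapse_characterization} with $P=B$, $\Col{B}={}_f\Res_g(B)$, and $\phi$ the cartesian embedding: the equivalence of its items~1 and~3 says that this cartesian embedding is a bimodule collapse cell if and only if composition with $\phi$ induces a bijection
\begin{equation*}
   \dcat{D}(B;Q)\iso{}_M\Bimod_N\bigl({}_f\Res_g(B),{}_f\Res_g(Q)\bigr)
\end{equation*}
naturally in $Q\colon d\tickar d'$. But composition with the \emph{cartesian} cell $\phi$ is exactly the map on hom-sets induced by the functor ${}_f\Res_g$ applied to a morphism $B\to Q$ (using the bijection of Lemma~\ref{lem:restriction_bimodule} on the right, and on the left the fact that, $\phi$ being cartesian, any cell $B\to Q$ factors uniquely through it—which is just the statement that $\dcat{D}(B;Q)\iso{}_f\Res_g$-target hom-set). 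So the displayed bijection being an isomorphism for all $Q$ is literally the statement that ${}_f\Res_g$ is fully-faithful on the relevant hom-categories. Since condition~\ref{def:regular-bimodule_collapse} asks for the cartesian embedding to be a bimodule collapse for every $B$, and condition $1'$ asks for ${}_f\Res_g$ to be fully-faithful for every pair of regular covers, the two are equivalent.

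The main obstacle will be the bookkeeping in the previous paragraph: one must check carefully that ``composition with the cartesian embedding $\phi$'' really is the same map as ``apply the functor ${}_f\Res_g$ to a $2$-cell'', and in particular that a cell $B\Rightarrow Q$ over the frames $f,g$ corresponds, under the cartesian universal properties, to a bimodule homomorphism ${}_f\Res_g(B)\to{}_f\Res_g(Q)$, with no extra data lost. This is where one leans on naturality in Lemma~\ref{lem:restriction_bimodule} and on the fact that a cartesian cell's factorization property is exactly a hom-set bijection. I would also remark that strictly speaking $\Col{M}$ is only determined up to equivalence, so one should either fix the representative $\Col{M}=d$, $i_M=f$ once and for all (legitimate since $f$'s kernel-restriction cell \emph{is} a normal collapse), or carry along the evident isomorphisms; I would take the former route to keep the argument clean. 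Everything else—the identification of the cartesian embedding with the universal cartesian cell, and the invocation of Proposition~\ref{prop:bimodule_collapse_characterization}—is immediate.
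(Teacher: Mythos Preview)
Your proposal is correct and follows essentially the same approach as the paper: fix regular covers $f,g$, and for each proarrow $B\colon d\tickar d'$ invoke the equivalence $1\Leftrightarrow 3$ of Proposition~\ref{prop:bimodule_collapse_characterization} to identify ``the cartesian embedding ${}_f\Res_g(B)\to B$ is a bimodule collapse'' with ``$\dcat{D}(B;Q)\to{}_{\ker(f)}\Bimod_{\ker(g)}({}_f\Res_g(B),{}_f\Res_g(Q))$ is a bijection for all $Q$'', which is precisely full-faithfulness of ${}_f\Res_g$. The paper's proof is a one-line version of yours; your extra remarks about identifying $i_M$ with $f$ and about the bookkeeping that ``composition with $\phi$'' agrees with the action of ${}_f\Res_g$ on morphisms are accurate elaborations rather than a different route.
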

\begin{proof}
   Let $B\colon d\tickar d'$ be a proarrow. By Proposition~\ref{prop:bimodule_collapse_characterization},
   the embedding ${}_f\Res_g(B)\to B$ is a collapse if and only if the function
   $\dcat{D}(B;\textrm{--})\to{}_{\ker(f)}\Bimod_{\ker(g)}({}_f\Res_g(B),{}_f\Res_g(\textrm{--}))$
   is a bijection.
\end{proof}

\begin{lemma}\label{lem:collapse_normal_cartesian}
   Let $\dcat{D}$ be a virtual equipment satisfying condition~\ref{def:regular-bimodule_collapse} of
   Definition~\ref{def:regular_equipment}. A collapse cell
   \begin{equation*}
      \fig{43}
   \end{equation*}
   in $\dcat{D}$ is normal if and only if $\vec{\imath}$ is cartesian.
\end{lemma}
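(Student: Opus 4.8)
The plan is to reduce both halves of the equivalence to a statement about the canonical comparison monoid homomorphism $\kappa\colon M\to\ker(i_M)$, and then to play the universal property of the collapse off against condition~\ref{def:regular-bimodule_collapse}. Since $\ker(i_M)$ is by definition the restriction ${}_{i_M}\Res_{i_M}(U_{\Col{M}})$, there is a cartesian embedding $j\colon\ker(i_M)\to U_{\Col{M}}$, and the collapse cell $\vec{\imath}_M\colon M\to U_{\Col{M}}$ factors uniquely through it as $\vec{\imath}_M=j\circ\kappa$ for a monoid homomorphism $\kappa$. Because $j$ is cartesian, $\vec{\imath}_M$ is cartesian if and only if $\kappa$ is an isomorphism (of proarrows, equivalently of monoids, by the uniqueness of monoid structures in Lemma~\ref{lem:restriction_bimodule}). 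Thus the lemma asserts that the collapse is normal if and only if $\kappa$ is invertible.

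Next I would rewrite ``normal''. By Proposition~\ref{prop:bimodule_collapse_characterization}, the collapse is normal exactly when composition with $\vec{\imath}_M$ is a bijection $\dcat{D}(U_{\Col{M}};Q)\iso{}_{i_M}\Emb_{i_M}(M,Q)$, natural in proarrows $Q\colon\Col{M}\tickar\Col{M}$ (the case of arbitrary legs on top reduces to this by passing to restrictions $Q(\tilde g,\tilde f)$). On the other hand, since $j$ is cartesian, Proposition~\ref{prop:equipment_2-cell_bijection} gives a bijection $\dcat{D}(U_{\Col{M}};Q)\iso{}_{i_M}\dcat{D}_{i_M}(\ker(i_M);Q)$; and every cell on the right is automatically a bimodule embedding, being a composite of the monoid homomorphism $j$ with a cell between proarrows regarded as bimodules over trivial monoids, so the right-hand side equals ${}_{i_M}\Emb_{i_M}(\ker(i_M),Q)$. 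Chasing the two bijections, ``normal'' is precisely the statement that precomposition with $\kappa$ induces a bijection ${}_{i_M}\Emb_{i_M}(\ker(i_M),Q)\iso{}_{i_M}\Emb_{i_M}(M,Q)$ for all such $Q$.

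The direction ``$\kappa$ invertible $\Rightarrow$ normal'' is then immediate, and in fact this half does not even need condition~\ref{def:regular-bimodule_collapse}: if $\vec{\imath}_M$ is cartesian then $M\iso U_{\Col{M}}(i_M,i_M)$, so Proposition~\ref{prop:equipment_2-cell_bijection}, together with the observation above that cells with this kind of source are automatically embeddings, supplies the required bijection directly.

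The converse ``normal $\Rightarrow$ $\kappa$ invertible'' is the main obstacle, and this is where condition~\ref{def:regular-bimodule_collapse} is essential: normality only controls cells \emph{out of} $M$ (into proarrows based at $\Col{M}$), whereas invertibility of $\kappa$ also involves cells \emph{into} $M$, and the collapse universal property does not by itself connect the two. My plan is first to show that $i_M$ is a regular cover---since $j$ is already cartesian, by the easy direction it suffices to check that $j$ exhibits $\Col{M}$ as the \emph{monoid} collapse of $\ker(i_M)$, which I would get by transporting the universal property of $\Col{M}$ along $\kappa$, the bijection of the previous paragraph (suitably extended to general targets via restrictions) being exactly what makes this transport go through. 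Once $i_M$ is a regular cover, Proposition~\ref{prop:regular_alt_axiom} makes ${}_{i_M}\Res_{i_M}\colon\HHor(\dcat{D})(\Col{M},\Col{M})\to{}_{\ker(i_M)}\Bimod_{\ker(i_M)}$ fully faithful; feeding this back through the chain of bijections above, together with Lemma~\ref{lem:restriction_bimodule} and the identity ${}_{i_M}\Res_{i_M}(U_{\Col{M}})=\ker(i_M)$, shows that precomposition with $\kappa$ is a bijection on enough bimodule hom-sets---in particular those out of $\ker(i_M)$ itself---to force $\kappa$ to be invertible. I expect the bookkeeping of frames, and of which monoid each bimodule structure is taken over, to be the tedious part, and the genuinely delicate point to be ensuring that the final step has access to enough bimodules to detect the isomorphism.
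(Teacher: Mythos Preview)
There is a genuine gap. The bijection you invoke in the second paragraph,
\[
\dcat{D}(U_{\Col{M}};Q)\;\iso\;{}_{i_M}\dcat{D}_{i_M}(\ker(i_M);Q),
\]
does not follow from Proposition~\ref{prop:equipment_2-cell_bijection} or from the cartesianness of $j$. The universal property of a cartesian cell governs cells mapping \emph{into} it; it says nothing about cells mapping \emph{out of} the restriction. Proposition~\ref{prop:equipment_2-cell_bijection} lets you shuffle representables between the frames and the target of a cell---it does not let you trade the source $U_{\Col{M}}$ for $\ker(i_M)$. In fact the bijection you want is exactly the statement that the cartesian embedding $j$ is a bimodule collapse, i.e.\ condition~\ref{def:regular-bimodule_collapse} applied with $f=g=i_M$, which is precisely what needs $i_M$ to already be a regular cover. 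So you have not escaped the circularity you correctly flagged; you have hidden it inside an appeal to the wrong proposition. For the same reason your claim that ``cartesian $\Rightarrow$ normal'' needs no use of condition~\ref{def:regular-bimodule_collapse} is false: passing from a cartesian restriction cell to a bimodule collapse is exactly the content of that condition.

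This undercuts your reformulation of ``normal'' and hence the transport argument in your final paragraph as well: you planned to show $\Col{M}$ is the monoid collapse of $\ker(i_M)$ by carrying the universal property across the displayed bijection, but that bijection is what is in question.

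The paper's proof uses the same commutative triangle you are describing---one leg is composition with $\vec{\imath}$ (bijective iff normal), the other is precomposition with your $\kappa$ (bijective iff $\vec{\imath}$ is cartesian)---but it does not try to route around condition~\ref{def:regular-bimodule_collapse}: it simply cites Proposition~\ref{prop:regular_alt_axiom} for the remaining leg (the restriction functor on hom-sets) being a bijection, and reads off the equivalence.
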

\begin{proof}
   Consider the diagram (letting $i\coloneq i_M$)
   \begin{equation*}
      \fig{44}
   \end{equation*}
   in which the two downwards functions are induced by composition with $\vec{\imath}$. The top
   function is a bijection by Proposition~\ref{prop:regular_alt_axiom}. By
   Proposition~\ref{prop:bimodule_collapse_characterization}, the collapse is normal precisely if 1
   is a bijection. On the other hand, 2 is a bijection if and only if $\vec{\imath}$ induces an
   isomorphism of bimodules $M\iso{}_i\Res_i(\Col{M})$, which by
   Lemma~\ref{lem:restriction_bimodule} happens if and only if $\vec{\imath}$ is cartesian. Thus
   the collapse is normal if and only if $\vec{\imath}$ is cartesian.
\end{proof}

\begin{corollary}\label{cor:collapse_is_cartesian}
   In the presence of \ref{def:regular-bimodule_collapse},
   condition~\ref{def:regular-ker_has_collapse} of Definition~\ref{def:regular_equipment} is
   equivalent to:
   \begin{compactenum}[1'.]
      \item Every effective monoid $M$ has a collapse $(i,\vec{\imath})$, and $\vec{\imath}$ is
         cartesian.
   \end{compactenum}
   In other words, in a regular virtual equipment, every kernel is the kernel of its collapse.
\end{corollary}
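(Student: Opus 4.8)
The plan is to read this off directly from Lemma~\ref{lem:collapse_normal_cartesian}, which---under hypothesis~\ref{def:regular-bimodule_collapse}---already identifies normality of a collapse cell with cartesianness of its $\vec{\imath}$. So essentially all that remains is to unwind the quantifiers in both directions and then to justify the closing slogan ``every kernel is the kernel of its collapse''.

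For the forward implication I would assume condition~\ref{def:regular-ker_has_collapse}, that every effective monoid $M$ has a normal collapse. A normal collapse is in particular a collapse (Definition~\ref{def:normal_collapse}), so $M$ has a collapse $(i,\vec{\imath})$; since that collapse cell is normal, Lemma~\ref{lem:collapse_normal_cartesian} gives that $\vec{\imath}$ is cartesian, which is exactly condition~$1'$. For the converse, assuming condition~$1'$, every effective monoid $M$ has a collapse $(i,\vec{\imath})$ with $\vec{\imath}$ cartesian; applying Lemma~\ref{lem:collapse_normal_cartesian} in the other direction, cartesianness of $\vec{\imath}$ forces the collapse to be normal, which is condition~\ref{def:regular-ker_has_collapse}. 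Hence the two conditions are equivalent in the presence of~\ref{def:regular-bimodule_collapse}.

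For the closing rephrasing, recall that an effective monoid $M\colon c\tickar c$ is by definition $\ker(f)$ for some vertical arrow $f$, and that the kernel of the collapse map $i\colon c\to\Col{M}$ is the monoid ${}_i\Res_i(\Col{M})$ obtained by restricting the trivial monoid on $\Col{M}$ along $i$ on both sides. By Lemma~\ref{lem:restriction_bimodule}, the cell $\vec{\imath}$ is cartesian precisely when it exhibits an isomorphism $M\iso{}_i\Res_i(\Col{M})=\ker(i)$; and this isomorphism is one of monoids, not merely of underlying proarrows, since the relevant bimodule structure on each side is the monoid structure and $\vec{\imath}$ is part of a monoid homomorphism by construction of the collapse. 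Thus condition~$1'$ says precisely that every effective monoid is the kernel of its own collapse.

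There is no genuine obstacle here: all of the substance was already packed into Lemma~\ref{lem:collapse_normal_cartesian} (itself resting on Proposition~\ref{prop:regular_alt_axiom} and Proposition~\ref{prop:bimodule_collapse_characterization}) and into Lemma~\ref{lem:restriction_bimodule}. The one point I would be careful to spell out is the last one---that the isomorphism supplied by Lemma~\ref{lem:restriction_bimodule} is an isomorphism of \emph{monoids} $M\iso\ker(i)$---which is immediate once one notes that ${}_i\Res_i$ is defined so as to transport exactly the monoid structure carried by $\vec{\imath}$.
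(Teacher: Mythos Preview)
Your proposal is correct and matches the paper's intent: the corollary is stated without proof, as it is meant to follow immediately from Lemma~\ref{lem:collapse_normal_cartesian}, and your argument unwinds the quantifiers in exactly the expected way. Your additional justification of the closing slogan via Lemma~\ref{lem:restriction_bimodule} is a welcome elaboration that the paper leaves implicit.
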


\begin{proposition}\label{prop:effective_bimod_has_collapse}
   Any effective bimodule in a regular virtual equipment has a collapse, and moreover the collapse
   cell is cartesian.
\end{proposition}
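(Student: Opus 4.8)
The plan is to reduce the statement to condition~\ref{def:regular-bimodule_collapse} of Definition~\ref{def:regular_equipment} by first collapsing the two monoids involved. So let $B\colon c\tickar d$ be an effective $(M,N)$-bimodule; by Definition~\ref{def:effective} there are vertical arrows $f\colon c\to c'$, $g\colon d\to d'$ with $M\iso\ker(f)$ and $N\iso\ker(g)$, and a proarrow $P\colon c'\tickar d'$ with $B\iso{}_f\Res_g(P)$. Since $M$ and $N$ are effective, condition~\ref{def:regular-ker_has_collapse} gives them normal collapses $i_M\colon c\to\Col{M}$ and $i_N\colon d\to\Col{N}$, and Corollary~\ref{cor:collapse_is_cartesian} adds that $\vec{\imath}_M,\vec{\imath}_N$ are cartesian and that $M\iso\ker(i_M)$, $N\iso\ker(i_N)$. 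The first thing I would record is that $i_M$ (and likewise $i_N$) is a \emph{regular cover}: $\vec{\imath}_M$ is a cartesian cell into $U_{\Col{M}}$ with both frames equal to $i_M$, so under the canonical isomorphism $M\iso\ker(i_M)$ it coincides with the defining cartesian cell of $\ker(i_M)$; hence the kernel diagram \eqref{eq:kernel_diagram} of $i_M$ is precisely the (normal) collapse of $\ker(i_M)$, which is exactly what it means for $i_M$ to be a regular cover.

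Next I would re-present the witness $(f,g,P)$ of effectiveness relative to $i_M$ and $i_N$. The monoid embedding $M\to c'$ determined by $M\iso\ker(f)$ factors uniquely through the universal embedding $\vec{\imath}_M$ by the defining property of the monoid collapse (Definition~\ref{def:monoid_collapse}), producing a vertical arrow $f'\colon\Col{M}\to c'$ with $f'\circ i_M=f$; likewise one obtains $g'\colon\Col{N}\to d'$ with $g'\circ i_N=g$. Set $Q = P(g',f')\colon\Col{M}\tickar\Col{N}$. Since restriction along the composite pair $(f,g)$ factors as restriction along $(f',g')$ followed by restriction along $(i_M,i_N)$, and since the induced bimodule structures are pinned down uniquely by Lemma~\ref{lem:restriction_bimodule}, the functor ${}_f\Res_g$ agrees with the functor sending $P$ to ${}_{i_M}\Res_{i_N}\bigl(P(g',f')\bigr)$; evaluating at $P$ yields an isomorphism of $(M,N)$-bimodules $B\iso{}_f\Res_g(P)\iso{}_{i_M}\Res_{i_N}(Q)$.

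Finally, since $i_M$ and $i_N$ are regular covers, condition~\ref{def:regular-bimodule_collapse} of Definition~\ref{def:regular_equipment}, applied with the proarrow $Q$, says precisely that the cartesian embedding ${}_{i_M}\Res_{i_N}(Q)\to Q$ is a bimodule collapse cell. Transporting it along the isomorphism $B\iso{}_{i_M}\Res_{i_N}(Q)$ exhibits $Q$ as a collapse $\Col{B}$ of $B$, and the resulting collapse cell is the composite of an isomorphism with a cartesian cell, hence cartesian, which gives the last assertion. The step I expect to be the main obstacle is the claim in the previous paragraph that the identification ${}_f\Res_g(P)\iso{}_{i_M}\Res_{i_N}(P(g',f'))$ respects the $(M,N)$-bimodule structures and not merely the underlying proarrows. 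The clean way to see this is that on each side the bimodule structure is the unique one making a suitable cartesian cell into $P$ with frames $f$, $g$ an embedding — on the right, that cartesian cell is the composite $P(g',f')(i_N,i_M)\to P(g',f')\to P$ — so the uniqueness clause of Lemma~\ref{lem:restriction_bimodule} forces agreement once one checks that this composite cell is again a bimodule embedding over the monoid embeddings underlying $f$ and $g$; carrying that out is the kind of routine but tedious diagram-chasing with cartesian cells that is deferred to the reader in Proposition~\ref{prop:ModD_is_regular}.
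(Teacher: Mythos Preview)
Your proof is correct and follows essentially the same route as the paper: factor the given cartesian embedding $B\to P$ through the intermediate restriction $P(g',f')$ along the factorizations $f=f'\circ i_M$, $g=g'\circ i_N$, observe that the upper cell is again cartesian, and then invoke condition~\ref{def:regular-bimodule_collapse} with the regular covers $i_M,i_N$. The paper's proof is terser---it simply pastes the two cartesian cells and appeals to cancellation to conclude the upper one is cartesian---whereas you spell out the regular-cover status of $i_M,i_N$ and the bimodule-structure compatibility via Lemma~\ref{lem:restriction_bimodule}; but the underlying argument is the same.
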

\begin{proof}
   Let $M$ and $N$ be effective monoids, and suppose given an effective $(M,N)$-bimodule $B\colon
   c\tickar c'$, with cartesian embedding
   \begin{equation}\label{eq:bimod_collapse_lemma}
      \fig{45}
   \end{equation}
   We can factor this as
   \begin{equation*}
      \fig{46}
   \end{equation*}
   It follows that $\phi$ is cartesian, hence a bimodule collapse (noting that
   $\ker(i_M)\iso M$ by Corollary~\ref{cor:collapse_is_cartesian}, and similarly for $g$).
\end{proof}

\begin{lemma}\label{lem:regular_cart_lemma}
   Consider a diagram in a regular virtual equipment $\dcat{D}$ of the form
   \begin{equation}\label{eq:cart_lemma}
      \fig{47}
   \end{equation}
   in which $f$ and $f'$ are regular covers. If the composite 2-cell is cartesian, then $\phi$ is
   also cartesian.
\end{lemma}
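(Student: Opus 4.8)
First I would set up the reduction: cartesianness of $\phi$ should come down to invertibility of a single identity-framed cell, and then Proposition~\ref{prop:regular_alt_axiom} delivers that. The diagram \eqref{eq:cart_lemma} pastes $\phi$ together with a cartesian cell $\chi$ whose frames are the regular covers $f$ and $f'$, and we are told the composite $\phi\circ\chi$ is cartesian. Since precomposing $\chi$ (hence the composite) with the isomorphism induced by $\chi$ changes nothing relevant, I may as well assume $\chi$ is the canonical restriction cell $B(f',f)\Rightarrow B$, where $B\colon d\tickar d'$ is the proarrow it lands in. Write $\phi\colon B\Rightarrow Q$ with frames $g\colon d\to e$ and $g'\colon d'\to e'$, and factor it as $\phi=\kappa\circ\bar\phi$ through the canonical cartesian cell $\kappa\colon Q(g',g)\Rightarrow Q$, so $\bar\phi\colon B\Rightarrow Q(g',g)$ has identity frames. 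Because a cell is cartesian exactly when its factorization through the relevant restriction cell is, and an identity-framed cartesian cell is an isomorphism, it suffices to prove that $\bar\phi$ is invertible in $\HHor(\dcat{D})(d,d')$.

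Next I would rephrase the hypothesis in terms of $\bar\phi$. Composing $\kappa$ with the canonical cell $Q(g',g)(f',f)\Rightarrow Q(g',g)$ yields the canonical cartesian cell exhibiting $Q(g'f',gf)\iso Q(g',g)(f',f)$; feeding this and the uniqueness of factorizations through cartesian cells into $\phi\circ\chi=\kappa\circ\bar\phi\circ\chi$, one finds that $\phi\circ\chi$ factors through the cartesian cell $Q(g'f',gf)\Rightarrow Q$ via exactly the restricted cell $\bar\phi(f',f)={}_f\Res_{f'}(\bar\phi)$ (Lemma~\ref{lem:restriction_bimodule}). So ``$\phi\circ\chi$ is cartesian'' is precisely the statement that ${}_f\Res_{f'}(\bar\phi)$, as a morphism ${}_f\Res_{f'}(B)\to{}_f\Res_{f'}(Q(g',g))$ in ${}_{\ker(f)}\Bimod_{\ker(f')}$, is an isomorphism.

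Finally, $f$ and $f'$ are regular covers, so Proposition~\ref{prop:regular_alt_axiom} says the functor ${}_f\Res_{f'}\colon\HHor(\dcat{D})(d,d')\to{}_{\ker(f)}\Bimod_{\ker(f')}$ is fully faithful, hence reflects isomorphisms. Therefore $\bar\phi$ is an isomorphism, and so $\phi$ is cartesian, as required.

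The step I expect to be most delicate is the middle one: one must check that the isomorphism coming out of ``$\phi\circ\chi$ is cartesian'' is genuinely ${}_f\Res_{f'}$ applied to the cell $\bar\phi$, rather than some unrelated isomorphism of bimodules — it is only in that form that full-faithfulness of restriction gives invertibility of $\bar\phi$ itself. This amounts to a diagram chase through the universal properties of the three cartesian cells involved, together with the naturality in Lemma~\ref{lem:restriction_bimodule}, all while keeping the induced bimodule structures straight; it is routine but fussy. One should also verify the harmless-looking reduction to the case where $\chi$ is a canonical restriction cell.
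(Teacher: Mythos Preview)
Your argument is correct. Both your proof and the paper's reduce to showing that the identity-framed comparison $\bar\phi\colon B\to Q(g',g)$ is invertible, but they finish differently. The paper factors the composite through the cartesian cell $Q(g',g)\Rightarrow Q$ to obtain a cell $\psi$ with frames $f,f'$; since the composite is cartesian so is $\psi$, and then axiom~\ref{def:regular-bimodule_collapse} of Definition~\ref{def:regular_equipment} is invoked \emph{directly}: both $\psi$ and the upper cell $\chi$ are cartesian embeddings with regular-cover frames, hence both are bimodule collapses of the same bimodule, and the uniqueness of collapses forces $B\iso Q(g',g)$ via $\bar\phi$. You instead pass through the reformulation in Proposition~\ref{prop:regular_alt_axiom}: you identify the factored composite as ${}_f\Res_{f'}(\bar\phi)$ and then use that ${}_f\Res_{f'}$ is fully faithful, hence reflects isomorphisms. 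The two routes are equivalent by that very proposition; the paper's is marginally shorter because it avoids the bookkeeping (which you rightly flag as the delicate step) of verifying that the induced isomorphism really is ${}_f\Res_{f'}(\bar\phi)$, while yours makes explicit exactly which categorical fact is doing the work.
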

\begin{proof}
   We could also factor the composite 2-cell through the restriction $Z(g',g)$:
   \begin{equation*}
      \fig{48}
   \end{equation*}
   If the composite is cartesian, then so is $\psi$, hence by
   part~\ref{def:regular-bimodule_collapse} of Definition~\ref{def:regular_equipment} $\psi$ is also
   a bimodule collapse cell. The upper 2-cell in~\eqref{eq:cart_lemma} is a bimodule collapse cell
   for the same reason. But then by the universal property of bimodule collapse, this factorization
   is in fact isomorphic to~\eqref{eq:cart_lemma}, hence $\phi$ is cartesian.
\end{proof}

\section{The factorization system}\label{sec:factorization}

One of the primary facts about any regular category is the existence of an image factorization.
In a regular category $\cat{C}$ there is an orthogonal factorization system
$(\mathscr{E},\mathscr{M})$ where $\mathscr{E}$ is the class of regular epimorphisms and
$\mathscr{M}$ is the class of monomorphisms. We will now see that a regular virtual equipment admits
an analogous orthogonal factorization system.

In a regular category, the image of a morphism is defined to be the coequalizer of its kernel, and
it is shown that any morphism factors through its image. We can perform the analogous construction
in a regular virtual equipment: for any vertical arrow $f\colon c\to d$ we define its image to be
the collapse $\Col{\ker(f)}$ of its kernel, and we get a unique arrow
$\tilde{f}\colon\Col{\ker(f)}\to d$ such that
\begin{equation}\label{eq:image_fact}
   \vcenter{\hbox{\includegraphics{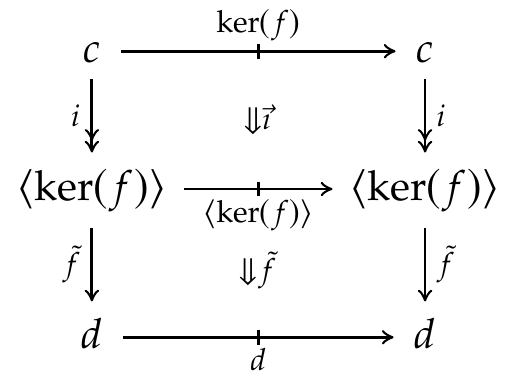}}}
   \vcenter{\hbox{\quad = \quad}}
   \vcenter{\hbox{\includegraphics{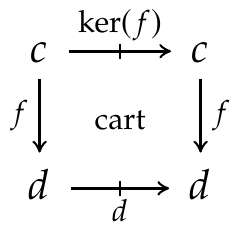}}}
\end{equation}

Another standard fact from the theory of regular categories is that the classes of regular
epimorphisms and strong epimorphisms coincide, where a morphism $f$ is called a strong epimorphism
if it is left-orthogonal to the class of monomorphisms. We begin with an analogous definition in the
setting of virtual equipments.

\begin{definition}
   Let $f\colon a\to b$ be a vertical arrow in a virtual equipment $\dcat{D}$. Say that $f$ is a
   \emph{strong cover} if it is left 2-orthogonal to the class of inclusions in the vertical 2-category
   $\VVer(\dcat{D})$, i.e.~if for any inclusion $g\colon c\rightarrowtail d$ the commuting square
   \begin{equation*}
      \fig{51}
   \end{equation*}
   is a (strict) pullback of categories.
\end{definition}

\begin{proposition}\label{prop:regular_covers_are_strong}
   Any regular cover $f\colon a\twoheadrightarrow b$ in a virtual equipment $\dcat{D}$ is a strong
   cover.
\end{proposition}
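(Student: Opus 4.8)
The plan is to fix an inclusion $g\colon c\rightarrowtail d$ and show that the comparison functor
\[
 F\colon \VVer(\dcat{D})(b,c)\longrightarrow \VVer(\dcat{D})(a,c)\times_{\VVer(\dcat{D})(a,d)}\VVer(\dcat{D})(b,d)
\]
is an isomorphism of categories, by checking separately that it is bijective on objects and fully faithful. Throughout I would use two facts. First, $f$ being a regular cover means precisely that the cartesian restriction cell $\vec{\imath}\colon\ker f\to U_b$ exhibits $b$ as the monoid collapse $\Col{\ker f}$, so that composition with the universal embedding $(f,\vec{\imath})$ is a bijection $\dcat{D}_0(b,x)\iso\Emb(\ker f,x)$, natural in $x$. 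Second, $g$ being an inclusion means the unit $2$-cell $\eta_g\colon U_c\to U_d$ on $g$ is cartesian, i.e.\ exhibits $U_c\iso U_d(g,g)$; in particular composition with $\eta_g$ is injective on cells.

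For bijectivity on objects, I would start from vertical arrows $h\colon a\to c$ and $k\colon b\to d$ with $gh=kf$, and push the universal embedding forward along $k$, obtaining the embedding $(kf,\eta_k\circ\vec{\imath})\colon\ker f\to d$ whose comparison cell $\eta_k\circ\vec{\imath}\colon\ker f\to U_d$ has both frames $kf=gh$. Since $\eta_g$ is cartesian, this cell factors uniquely as $\eta_g\circ\chi$ for a cell $\chi\colon\ker f\to U_c$ with both frames $h$, and the crux is to verify that $(h,\chi)$ is again an embedding: the unit and multiplication axioms for $(h,\chi)$ become, after postcomposition with $\eta_g$, the corresponding axioms for $(gh,\eta_g\circ\chi)=(kf,\eta_k\circ\vec{\imath})$ --- using that the vertical arrow $g$ induces a monoid homomorphism between the trivial monoids on $c$ and on $d$ --- and those hold, so injectivity of composition with $\eta_g$ gives the axioms for $(h,\chi)$. (Briefly: an inclusion reflects monoid homomorphisms.) The monoid-collapse universal property then produces the required diagonal $\ell\colon b\to c$ with $\ell\circ(f,\vec{\imath})=(h,\chi)$, so $\ell f=h$; naturality of the collapse bijection forces $g\ell=k$, since $g\ell$ and $k$ name the same embedding $(gh,\eta_g\circ\chi)$. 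Uniqueness of $\ell$ is the same computation run in reverse, again using injectivity of composition with $\eta_g$. I expect this verification that $(h,\chi)$ is an embedding to be the main, though entirely routine, obstacle.

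For full faithfulness I would first record that an inclusion is ``locally fully faithful'' in $\VVer(\dcat{D})$: for any object $e$ and parallel $\ell,\ell'\colon e\to c$, whiskering by $g$ is a bijection $\VVer(\dcat{D})(e,c)(\ell,\ell')\iso\VVer(\dcat{D})(e,d)(g\ell,g\ell')$, because a $2$-cell $\ell\Rightarrow\ell'$ is, by the cartesian property of restrictions, a cell $U_e\to U_c(\ell',\ell)$ with identity frames, and $U_c(\ell',\ell)\iso U_d(g,g)(\ell',\ell)\iso U_d(g\ell',g\ell)$ since $\eta_g$ is cartesian and restrictions compose --- and tracing the isomorphisms shows the induced bijection of $2$-cell sets is precisely whiskering by $g$. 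Applying this with $e=b$ and with $e=a$, both vertical legs of the square of $2$-cell sets that $F$ induces on $\VVer(\dcat{D})(b,c)(\ell,\ell')$ are bijections; hence the projection from the pullback down to $\VVer(\dcat{D})(b,d)(g\ell,g\ell')$ is a bijection, being the base change of the bijection $\VVer(\dcat{D})(a,c)(\ell f,\ell' f)\to\VVer(\dcat{D})(a,d)(g\ell f,g\ell' f)$, and whiskering by $g$ --- itself a bijection --- equals $F$ followed by this projection, so $F$ is bijective on that hom-set. Thus $F$ is fully faithful, and combined with bijectivity on objects, $F$ is an isomorphism; the defining square is therefore a strict pullback and $f$ is a strong cover. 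It is worth noting that only the monoid-collapse half of the normality of the collapse of $\ker f$ enters the argument; the bimodule-collapse half plays no role here.
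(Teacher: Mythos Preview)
Your argument is correct, and the one-dimensional half is essentially identical to the paper's: build an embedding $(h,\chi)\colon\ker(f)\to c$ by reflecting through the cartesian cell $\eta_g$, then factor through the monoid collapse. The two-dimensional half, however, is genuinely different. The paper handles the lifting of $2$-cells by constructing a bimodule embedding of the unit $(\ker f,\ker f)$-bimodule into $U_c$ and then invoking the \emph{bimodule} collapse property --- this is exactly where the normality of the regular cover enters in the paper's proof. You instead observe that an inclusion is locally fully faithful in $\VVer(\dcat{D})$ (since $\eta_g$ cartesian makes post-whiskering by $g$ a bijection on $2$-cells), and then a purely formal pullback argument gives full faithfulness of the comparison functor. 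Your route is more elementary and, as you correctly note, shows something slightly stronger: only the monoid-collapse half of ``regular cover'' is actually needed for this proposition. The paper's approach, on the other hand, keeps the argument uniform --- monoid collapse for objects, bimodule collapse for $2$-cells --- and exercises the normality hypothesis that is in any case part of the definition being used.
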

\begin{proof}
   Suppose we have an inclusion $g\colon c\rightarrowtail d$ and a commutative square $v\circ f =
   g\circ u$ in $\VVer(\dcat{D})$. We need to show there is a unique arrow $h\colon b\to c$ such
   that $g\circ h=v$ and $h\circ f=u$. Because $g$ is an inclusion, there is a unique 2-cell $\phi$
   satisfying
   \begin{equation}\label{eq:orthogA}
      \fig{52}
      \vcenter{\hbox{\quad = \quad}}
      \fig{53}
   \end{equation}
   and it is not hard to check, again using that $g$ is an inclusion, that $(u,\phi)$ is an embedding
   $\ker(f)\to c$.  Then, because $f$ is a regular cover, there is a unique arrow
   $h\colon b\to c$ satisfying
   \begin{equation}\label{eq:orthogB}
      \fig{54}
      \vcenter{\hbox{\quad = \quad}}
      \fig{55}
   \end{equation}
   We can read $h\circ f=u$ directly off \eqref{eq:orthogB}, while $g\circ h=v$ follows because it
   becomes true after precomposition with $f$.

   If $h'$ is another arrow such that $h'\circ f=u$ and $g\circ h'=v$, then \eqref{eq:orthogB}
   with $h'$ in place of $h$ holds because it becomes true after postcomposition with $g$, and
   therefore $h'=h$ by the universality of $f$.

   For the 2-dimensional orthogonality, suppose we have 2-cells $\alpha\colon u\Rightarrow u'$ in
   $\VVer(\dcat{D})(a,c)$ and $\beta\colon v\Rightarrow v'$ in $\VVer(\dcat{D})(b,d)$, such that
   $g\circ\alpha=\beta\circ f$. Similarly to \eqref{eq:orthogA}, there is a unique 2-cell $\psi$
   satisfying
   \begin{equation}\label{eq:orthogC}
      \fig{56}
      \vcenter{\hbox{\quad = \quad}}
      \fig{57}
   \end{equation}
   and, using once more that $g$ is an inclusion, one can verify that $\psi$ is a bimodule embedding
   $\tensor[_{u'}]{\psi}{_{u}}\colon\tensor[_{\ker(f)}]{\ker(f)}{_{\ker(f)}}\to c$, and also that
   $\psi\circ e_{\ker(f)}=\alpha$, where $e_{\ker(f)}\colon a\Rightarrow \ker(f)$ is the unit of the
   monoid $\ker(f)$. Because $f$ is a regular cover, hence $\ker(f)\to b$ is a bimodule collapse,
   there is a unique $\gamma$ such that
   \begin{equation}\label{eq:orthogD}
      \fig{58}
      \vcenter{\hbox{\quad = \quad}}
      \fig{59}
   \end{equation}
   and this $\gamma$ is the 2-cell $h\Rightarrow h'$ in $\VVer(\dcat{D})(b,c)$ we wanted. By
   precomposing \eqref{eq:orthogD} with the unit $e_{\ker(f)}$ of the monoid $\ker(f)$, we get
   $\gamma\circ f = \alpha$, and $g\circ\gamma=\beta$ holds because it becomes true after precomposing
   with the collapse $\ker(f)\to b$.

   Finally, verifying that another $\gamma'$ satisfying $\gamma'\circ f=\alpha$ and
   $g\circ\gamma'=\beta$ must be equal to $\gamma$ is analogous to the uniqueness of $h$ above.
\end{proof}

\begin{theorem}\label{thm:image_factorization}
   Let $\dcat{D}$ be a regular virtual equipment. There is an orthogonal
   2-factorization system $(\mathcal{E},\mathcal{M})$ on the vertical 2-category $\VVer(\dcat{D})$,
   where $\mathcal{E}$ is the class of regular covers, and $\mathcal{M}$ is the class of
   inclusions.
\end{theorem}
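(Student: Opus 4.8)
The plan is to verify directly the three defining properties of an orthogonal 2-factorization system on $\VVer(\dcat{D})$: (a) every vertical arrow factors as a regular cover followed by an inclusion; (b) $\mathcal{E}$ and $\mathcal{M}$ are each replete under isomorphism and closed under composition with 2-isomorphisms; (c) every $f\in\mathcal{E}$ is left 2-orthogonal to every $g\in\mathcal{M}$. First I would establish the factorization (a) using the image construction sketched in \eqref{eq:image_fact}: for $f\colon c\to d$, form the kernel $\ker(f)$, take its normal collapse $i_f\colon c\twoheadrightarrow\Col{\ker(f)}$ (which exists by Corollary~\ref{cor:collapse_is_cartesian}, since $\ker(f)$ is effective), and use the universal property of the collapse embedding to produce the unique $\tilde f\colon\Col{\ker(f)}\to d$ with $\tilde f\circ i_f=f$ and the compatibility of $\vec{\imath}$ with the kernel cell. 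The map $i_f$ is a regular cover essentially by construction. The remaining content of (a) is that $\tilde f$ is an inclusion: I would show $\ker(\tilde f)$ is the trivial monoid on $\Col{\ker(f)}$ by observing that $\ker(f)\iso{}_{i_f}\Res_{i_f}(\ker(\tilde f))$ (restricting the kernel cell of $\tilde f$ along $i_f$ recovers the kernel cell of $f$, since restriction of units composes), and then using that $\vec{\imath}_f$ is cartesian together with Corollary~\ref{cor:collapse_is_cartesian} — ``every kernel is the kernel of its collapse'' — to conclude $\ker(\tilde f)$ restricts to $\ker(f)$ via the \emph{cartesian} cell exhibiting $\ker(f)$ as the kernel of $i_f$; since $i_f$ is a regular cover, Proposition~\ref{prop:regular_alt_axiom} (full faithfulness of ${}_{i_f}\Res_{i_f}$) forces $\ker(\tilde f)$ to be the trivial monoid, i.e.\ $\tilde f$ is an inclusion.

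Next, (c): the diagonal 2-orthogonality is exactly Proposition~\ref{prop:regular_covers_are_strong}, which states that every regular cover is a strong cover, meaning left 2-orthogonal to the class of inclusions in $\VVer(\dcat{D})$ — and ``strong cover'' was defined precisely as this 2-orthogonality, unwinding to the statement that the relevant commuting square of hom-categories is a strict pullback. So (c) needs no new argument; I would simply cite it.

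For (b), repleteness of $\mathcal{M}$ under 2-isomorphism: if $f\colon c\rightarrowtail d$ is an inclusion and $f'\cong f$ in $\VVer(\dcat{D})$, then $\ker(f')\iso\ker(f)$ (kernels are defined by a restriction, hence invariant under vertical isomorphism of the arrow), so $\ker(f')$ is trivial and $f'$ is an inclusion; similarly $\mathcal{M}$ is closed under postcomposition with isomorphisms. For $\mathcal{E}$: if $f\colon c\twoheadrightarrow d$ is a regular cover and $f'\cong f$, the restriction cell exhibiting $\ker(f')$ transports to that of $\ker(f)$ and, by the universal property of normal collapse (which is invariant under isomorphism), $\ker(f')$'s restriction cell is again a normal collapse cell, so $f'$ is a regular cover. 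Once (a), (b), (c) are in hand, the standard fact that a class closed under the listed operations together with the orthogonal factorization is an orthogonal factorization system (e.g.\ uniqueness of the factorization follows formally from (c)) completes the proof; I would invoke this as the well-known characterization of orthogonal factorization systems, noting that it works verbatim in the 2-categorical setting because $\VVer(\dcat{D})$ is a strict 2-category and ``2-orthogonality'' has been defined via strict pullbacks of hom-categories.

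The main obstacle I anticipate is the verification inside step (a) that $\tilde f$ is an inclusion — specifically, correctly identifying the cartesian cell exhibiting $\ker(f)$ as ${}_{i_f}\Res_{i_f}$ of $\ker(\tilde f)$ and invoking the right combination of Corollary~\ref{cor:collapse_is_cartesian} and Proposition~\ref{prop:regular_alt_axiom}; everything else is either cited wholesale (Proposition~\ref{prop:regular_covers_are_strong}) or routine invariance-under-isomorphism bookkeeping.
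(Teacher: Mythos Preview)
Your proposal is correct and follows the paper's approach: orthogonality is Proposition~\ref{prop:regular_covers_are_strong}, the factorization is \eqref{eq:image_fact}, $i_f$ is a regular cover by construction, and the only real work is showing $\tilde f$ is an inclusion. The paper packages this last step as a direct application of Lemma~\ref{lem:regular_cart_lemma} (the composite cell in \eqref{eq:image_fact} is the cartesian kernel cell of $f$, the top cell has regular-cover frames, hence the bottom unit cell of $\tilde f$ is cartesian), whereas you unwind essentially the same argument by hand via Corollary~\ref{cor:collapse_is_cartesian} and Proposition~\ref{prop:regular_alt_axiom}.

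One imprecision to tighten: your sentence ``full faithfulness of ${}_{i_f}\Res_{i_f}$ forces $\ker(\tilde f)$ to be the trivial monoid'' is not quite right as stated, since full faithfulness of a functor does not by itself identify objects with isomorphic images. What you need to say is that the canonical unit cell $U_{\Col{\ker(f)}}\to\ker(\tilde f)$ restricts along $i_f$ to an isomorphism (indeed the identity on $\ker(f)$), and fully faithful functors \emph{reflect} isomorphisms, so this unit cell is itself an isomorphism; that is exactly the statement that $\tilde f$ is an inclusion. With that clarification your argument goes through, and it is effectively the content of Lemma~\ref{lem:regular_cart_lemma} specialized to this situation.
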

\begin{proof}
   The orthogonality of these two classes was proven in
   Proposition~\ref{prop:regular_covers_are_strong}. The factorization is constructed as
   in~\eqref{eq:image_fact}. The arrow $i\colon c\twoheadrightarrow\Col{\ker(f)}$ is clearly
   regular, and that $\tilde{f}$ is an inclusion follows directly from
   Lemma~\ref{lem:regular_cart_lemma}.
\end{proof}

\begin{corollary}
   In a regular virtual equipment, the classes of strong covers and regular covers coincide.
\end{corollary}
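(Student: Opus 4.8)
The plan is to prove the two inclusions of classes separately. One direction, that every regular cover is a strong cover, is exactly Proposition~\ref{prop:regular_covers_are_strong}, so nothing further is needed there; the substance is the converse, that every strong cover is a regular cover.

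For that direction, I would start from a strong cover $f\colon c\to d$ and apply the factorization of Theorem~\ref{thm:image_factorization} to write $f=\tilde f\circ i$ with $i\colon c\twoheadrightarrow\Col{\ker(f)}$ a regular cover and $\tilde f\colon\Col{\ker(f)}\rightarrowtail d$ an inclusion. The goal is to show $\tilde f$ is an isomorphism, after which $f$ is a regular cover because regular covers contain all isomorphisms and are closed under composition (both immediate from the definitions, using that the kernel of an isomorphism is the trivial monoid). To see that $\tilde f$ is invertible, I would run the standard retract argument for factorization systems inside $\VVer(\dcat{D})$. Since $f$ is a strong cover and $\tilde f$ is an inclusion, the commuting square with $i$ on top, $f$ on the left, $\tilde f$ on the right, and $\mathrm{id}_d$ along the bottom has a unique diagonal filler $r\colon d\to\Col{\ker(f)}$, giving $r\circ f=i$ and $\tilde f\circ r=\mathrm{id}_d$. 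Then $r\circ\tilde f$ and $\mathrm{id}_{\Col{\ker(f)}}$ are both diagonal fillers for the square with $i$ on the top and left edges and $\tilde f$ on the right and bottom edges; since $i$ is a regular cover, hence itself a strong cover by Proposition~\ref{prop:regular_covers_are_strong}, and $\tilde f$ is an inclusion, uniqueness of the filler forces $r\circ\tilde f=\mathrm{id}_{\Col{\ker(f)}}$. Together with $\tilde f\circ r=\mathrm{id}_d$ this shows $\tilde f$ is an isomorphism, so $f$ is a regular cover.

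I do not anticipate a real obstacle here. The only point to be slightly careful about is that ``strong cover'' is defined via the \emph{strict} pullback of hom-categories, so the one-dimensional lifting property invoked above—a unique diagonal $1$-cell, compatibly with the relevant composites—is literally part of the hypothesis; the two-dimensional part of being a strong cover is not needed for this direction, since once $f$ is known to be a regular cover its $2$-orthogonality against inclusions is recovered from Proposition~\ref{prop:regular_covers_are_strong}. The remaining bookkeeping, namely that isomorphisms are regular covers and that regular covers compose, is routine.
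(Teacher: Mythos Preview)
Your argument is correct and follows essentially the same route as the paper: invoke Proposition~\ref{prop:regular_covers_are_strong} for one direction, and for the converse factor $f=\tilde f\circ i$ via Theorem~\ref{thm:image_factorization} and use orthogonality to force $\tilde f$ to be an isomorphism. The paper phrases the last step more tersely as a cancellation property (if $\tilde f\circ i$ is a strong cover then so is $\tilde f$, hence $\tilde f$ is an inclusion and a strong cover, hence an isomorphism---note the printed version appears to have $i$ and $\tilde f$ swapped by a typo), whereas you spell out the underlying retract argument directly; these are the same idea.
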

\begin{proof}
   By Proposition~\ref{prop:regular_covers_are_strong} we know that every regular cover is a strong
   cover.

   Given a strong cover $f\colon c\twoheadrightarrow d$, by Theorem~\ref{thm:image_factorization} we can
   factor $f=\tilde{f}\circ i$ with $i$ regular cover and $\tilde{f}$ an inclusion. Because
   $\tilde{f}\circ i$ is a strong cover, it follows that $i$ is a strong cover as well, hence an
   isomorphism. Thus $f$ is a regular cover because $\tilde{f}$ is.
\end{proof}

\chapter{Exact virtual double categories}\label{ch:exact}

Recall that a category $\cat{C}$ with finite limits is called \emph{exact} if every internal
equivalence relation in $\cat{C}$ is effective.

\begin{definition}
   Let $\dcat{D}$ be a virtual equipment. Say that $\dcat{D}$ is \emph{exact} if $\dcat{D}$ is
   regular, and if every monoid and bimodule in $\dcat{D}$ is effective.
   (See~\ref{def:regular_equipment}, \ref{def:effective}.)
\end{definition}

\begin{proposition}
   For any virtual equipment $\dcat{D}$, the virtual equipment $\dMod(\dcat{D})$ is exact.
\end{proposition}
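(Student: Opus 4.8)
The plan is to lean on Proposition~\ref{prop:ModD_is_regular}, which already gives that $\dMod(\dcat{D})$ is regular; what remains is to verify that every monoid and every bimodule in $\dMod(\dcat{D})$ is effective in the sense of Definition~\ref{def:effective}. The guiding observation, which I would make precise in both cases, is that the ``collapse'' of a monoid (and, analogously, of a bimodule) in $\dMod(\dcat{D})$ leaves the underlying data in $\dcat{D}$ unchanged, so the collapse cell is literally an identity underneath --- in particular cartesian --- and a cartesian embedding exhibits a monoid as a kernel and a bimodule as a restriction.

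For the monoid case I would argue as follows. Let $\mathbf{N}$ be a monoid in $\dMod(\dcat{D})$, on an object $X$. By Example~\ref{ex:collapse_in_ModD} it has a collapse, with embedding $i_{\mathbf{N}}\colon X\to\Col{\mathbf{N}}$ and cell $\vec{\imath}_{\mathbf{N}}$; and, as recorded in the proof of Proposition~\ref{prop:ModD_is_regular}, $\vec{\imath}_{\mathbf{N}}$ is cartesian because its underlying cell in $\dcat{D}$ is an identity. Since $\vec{\imath}_{\mathbf{N}}\colon\mathbf{N}\Rightarrow U_{\Col{\mathbf{N}}}$ is a cartesian cell with left and right frame $i_{\mathbf{N}}$, it exhibits $\mathbf{N}$ as (isomorphic to) the restriction $U_{\Col{\mathbf{N}}}(i_{\mathbf{N}},i_{\mathbf{N}})$, which is by definition $\ker(i_{\mathbf{N}})$. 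Hence $\mathbf{N}$ is effective.

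For the bimodule case I would run the parallel argument. Given an $(\mathbf{N},\mathbf{N}')$-bimodule $B$ in $\dMod(\dcat{D})$, with $\mathbf{N}$ on $X$ and $\mathbf{N}'$ on $Y$, I would construct --- exactly as in Example~\ref{ex:collapse_in_ModD} --- a proarrow $P\colon\Col{\mathbf{N}}\tickar\Col{\mathbf{N}'}$ whose underlying proarrow in $\dcat{D}$ is that of $B$, now viewed as a bimodule over the monoids underlying $\Col{\mathbf{N}}$ and $\Col{\mathbf{N}'}$ via the left $\mathbf{N}$- and right $\mathbf{N}'$-actions of $B$, together with an embedding $B\to P$ over $i_{\mathbf{N}}$ and $i_{\mathbf{N}'}$ whose underlying cell in $\dcat{D}$ is the identity on $B$. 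Since $\vec{\imath}_{\mathbf{N}}$, $\vec{\imath}_{\mathbf{N}'}$ and this last cell are all cartesian, the embedding $B\to P$ is cartesian, and by the ``cartesian embedding'' reformulation in Definition~\ref{def:effective} this makes $B$ effective. Combining the two cases with regularity from Proposition~\ref{prop:ModD_is_regular} yields that $\dMod(\dcat{D})$ is exact.

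The step I expect to cause the most work is the verification underlying the bimodule case: as already flagged in Proposition~\ref{prop:ModD_is_regular}, one has to unwind the definitions of bimodule, bimodule homomorphism, and restriction \emph{inside} $\dMod(\dMod(\dcat{D}))$ to check that the proposed $P$ really is a well-defined bimodule over $\Col{\mathbf{N}}$ and $\Col{\mathbf{N}'}$, and that the identity-on-$B$ cell genuinely constitutes a cartesian embedding in the sense of Definition~\ref{def:embedding}. These are routine but tedious bookkeeping checks, of the same flavour as those deferred in Example~\ref{ex:collapse_in_ModD} and Proposition~\ref{prop:ModD_is_regular}, and I would state them as such rather than carry them out in full.
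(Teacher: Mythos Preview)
Your proposal is correct and follows essentially the same approach as the paper: you invoke regularity from Proposition~\ref{prop:ModD_is_regular}, then use the fact (from Example~\ref{ex:collapse_in_ModD} and the proof of Proposition~\ref{prop:ModD_is_regular}) that collapses in $\dMod(\dcat{D})$ have identity underlying cells, hence are cartesian, to conclude that every monoid and bimodule is effective. The paper's own proof is the same argument stated more tersely; your version simply makes the ``cartesian collapse cell $\Rightarrow$ effective'' step explicit.
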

\begin{proof}
   We saw in Proposition~\ref{prop:ModD_is_regular} that $\dMod(\dcat{D})$ is regular. Additionally,
   we saw in Example~\ref{ex:collapse_in_ModD} that in fact \emph{all} monoids in $\dMod(\dcat{D})$
   have a collapse, and it is clear from the construction that the collapse cell is cartesian. Hence
   every monoid in $\dMod(\dcat{D})$ is effective.

   From the proof of Proposition~\ref{prop:ModD_is_regular}, it is clear that any bimodule in
   $\dMod(\dcat{D})$ has a collapse with the same underlying proarrow in $\dcat{D}$, and that the
   collapse cell is cartesian. Hence every bimodule in $\dMod(\dcat{D})$ is effective.
\end{proof}

\begin{remark}
   We might hope to say that for any exact category $\cat{C}$, the virtual equipment
   $\dRel(\cat{C})$ is exact, extending Proposition~\ref{prop:Rel_is_regular}. However, this is not
   the case. For $\dRel(\cat{C})$ to be exact would mean that every reflexive and transitive
   relation (not necessarily symmetric) is the kernal pair of some morphism. This would imply that
   every reflexive and transitive relation \emph{is} symmetric, and this is clearly not true in
   general.

   It appears that exactness for a virtual equipment is a ``directed'' generalization of exactness
   for a category. This directedness is essential to the category-like examples, where the elements
   of a monoid $M$ become the morphisms in its collapse $\Col{M}$. Moreover, it is not even possible
   to \emph{define} what a symmetric monoid in a virtual equipment is without some extra structure.
\end{remark}

\begin{proposition}\label{prop:exact_characterization}
   A virtual equipment $\dcat{D}$ is exact if and only if:
   \begin{compactitem}
      \item every monoid $M\colon c\tickar c$ has a collapse $(i,\vec{\imath})\colon (c,M)\to\Col{M}$
         with $\vec{\imath}_M$ cartesian, and
      \item for every pair of monoids $M,N$, the restriction functor
         \begin{equation*}
            {}_{i_M}\Res_{i_N}\colon\HHor(\dcat{D})(\Col{M},\Col{N})\to{}_M\Bimod_N
         \end{equation*}
         is an equivalence of categories.
   \end{compactitem}
\end{proposition}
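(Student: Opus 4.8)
The plan is to prove both implications by combining the characterizations already established for regular virtual equipments with the definition of exactness. First I would handle the forward direction. Suppose $\dcat{D}$ is exact. Then by definition $\dcat{D}$ is regular and every monoid and bimodule is effective. Since every monoid $M$ is effective, it is the kernel of some vertical arrow, so $M\iso\ker(h)$ for some $h$; by part~\ref{def:regular-ker_has_collapse} of Definition~\ref{def:regular_equipment} (in the form given by Corollary~\ref{cor:collapse_is_cartesian}) it has a collapse $(i,\vec{\imath})$ with $\vec{\imath}$ cartesian. That establishes the first bullet. For the second bullet, fix monoids $M,N$. Since $\dcat{D}$ is regular and, by Corollary~\ref{cor:collapse_is_cartesian}, $M\iso\ker(i_M)$ and $N\iso\ker(i_N)$ with $i_M,i_N$ regular covers, Proposition~\ref{prop:regular_alt_axiom} tells us that ${}_{i_M}\Res_{i_N}$ is fully faithful. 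It remains to show it is essentially surjective. This is exactly where effectiveness of bimodules is used: given any $B\in{}_M\Bimod_N$, effectiveness of $B$ means there is a cartesian embedding $B\to P$ for some proarrow $P$ (after identifying $M,N$ with the relevant kernels, which Corollary~\ref{cor:collapse_is_cartesian} lets us take to be $\ker(i_M),\ker(i_N)$); by Lemma~\ref{lem:restriction_bimodule} a cartesian embedding $B\to P$ exhibits $B\iso{}_{i_M}\Res_{i_N}(P)$, so $B$ is in the essential image. Combined with full faithfulness, ${}_{i_M}\Res_{i_N}$ is an equivalence.

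For the converse, suppose $\dcat{D}$ satisfies the two bullets. I would first check that $\dcat{D}$ is regular. The first bullet gives collapses with cartesian $\vec{\imath}$ for \emph{all} monoids, in particular all effective ones; once we know condition~\ref{def:regular-bimodule_collapse} holds, Lemma~\ref{lem:collapse_normal_cartesian} upgrades "collapse with $\vec{\imath}$ cartesian" to "normal collapse", giving condition~\ref{def:regular-ker_has_collapse}. So the crux is deriving condition~\ref{def:regular-bimodule_collapse}, for which I would use the equivalent form~1' of Proposition~\ref{prop:regular_alt_axiom}: given regular covers $f\colon c\twoheadrightarrow d$ and $g\colon c'\twoheadrightarrow d'$, one must show ${}_f\Res_g\colon\HHor(\dcat{D})(d,d')\to{}_{\ker(f)}\Bimod_{\ker(g)}$ is fully faithful. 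Here I would argue that $f$ and $g$ being regular covers means (essentially by definition of regular cover, i.e.\ the kernel restriction is a normal collapse) $d$ is the collapse of $\ker(f)$ and $f$ plays the role of $i_{\ker(f)}$, and likewise for $g$; so ${}_f\Res_g$ is (up to the identifications $d\simeq\Col{\ker(f)}$, $d'\simeq\Col{\ker(g)}$) precisely the functor ${}_{i_M}\Res_{i_N}$ with $M=\ker(f)$, $N=\ker(g)$, which the second bullet asserts is an equivalence, hence in particular fully faithful. Now that $\dcat{D}$ is regular, effectiveness of every monoid follows from the first bullet as in the forward direction (a monoid with a collapse whose $\vec{\imath}$ is cartesian is the kernel of its collapse map, by the reasoning of Corollary~\ref{cor:collapse_is_cartesian}), and effectiveness of every bimodule $B\in{}_M\Bimod_N$ follows from essential surjectivity of ${}_{i_M}\Res_{i_N}$: writing $B\iso{}_{i_M}\Res_{i_N}(P)$ exhibits, via Lemma~\ref{lem:restriction_bimodule}, a cartesian embedding $B\to P$, which is the definition of $B$ being effective.

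I expect the main obstacle to be the bookkeeping around the identifications $d\simeq\Col{\ker(f)}$ when $f$ is a regular cover, and making sure the diagram "${}_f\Res_g$ transported along these equivalences equals ${}_{i_{\ker f}}\Res_{i_{\ker g}}$" commutes on the nose (or at least up to natural isomorphism, which is all that is needed for the equivalence/fully-faithful conclusions). The subtlety is that a regular cover $f$ is not literally equal to $i_{\ker(f)}$ but is connected to it by the universal property of the collapse; one needs the naturality clause in Lemma~\ref{lem:restriction_bimodule} (naturality of ${}_f\Emb_g(B,P)\iso{}_M\Bimod_N(B,P(g,f))$ in $P$) to see that restriction along $f$ and restriction along $i_{\ker(f)}$ agree under this comparison. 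Everything else is a direct assembly of Corollary~\ref{cor:collapse_is_cartesian}, Proposition~\ref{prop:regular_alt_axiom}, Lemma~\ref{lem:collapse_normal_cartesian}, and Lemma~\ref{lem:restriction_bimodule}, together with the definitions of effective monoid/bimodule and of exact virtual equipment.
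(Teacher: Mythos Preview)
Your overall strategy matches the paper's closely: both directions assemble Corollary~\ref{cor:collapse_is_cartesian}, Proposition~\ref{prop:regular_alt_axiom}, and Lemma~\ref{lem:restriction_bimodule} in the same way, and your treatment of the backward implication (including the identification of a regular cover $f$ with $i_{\ker f}$, and the use of Lemma~\ref{lem:collapse_normal_cartesian} to upgrade ``$\vec{\imath}$ cartesian'' to ``normal'') is correct and in fact slightly more explicit than the paper, which verifies condition~\ref{def:regular-bimodule_collapse} directly via a short bijection chain rather than routing through form~2' of Proposition~\ref{prop:regular_alt_axiom}.

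There is one genuine gap, in the essential-surjectivity step of the forward direction. Effectiveness of a bimodule $B$ provides a cartesian embedding over \emph{some} $f,g$ with $M\iso\ker(f)$ and $N\iso\ker(g)$; your parenthetical ``Corollary~\ref{cor:collapse_is_cartesian} lets us take [these] to be $\ker(i_M),\ker(i_N)$'' only yields the monoid identifications $M\iso\ker(i_M)$ and $N\iso\ker(i_N)$---it does not let you replace the frames $f,g$ of the given embedding by $i_M,i_N$, which is what you need in order to conclude $B\iso{}_{i_M}\Res_{i_N}(P)$ via Lemma~\ref{lem:restriction_bimodule}. The paper closes this gap by invoking Proposition~\ref{prop:effective_bimod_has_collapse}, whose proof factors the given cartesian embedding through the collapse maps $i_M,i_N$, producing a cartesian collapse cell $B\to\Col{B}$ over $i_M,i_N$ and hence $B\iso{}_{i_M}\Res_{i_N}(\Col{B})$. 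You should either cite that proposition or reproduce its one-step factorization argument.
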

\begin{proof}
   To begin, suppose $\dcat{D}$ satisfies the conditions of the proposition. Clearly, this implies
   that every monoid and bimodule in $\dcat{D}$ is effective, and that every effective monoid has a
   collapse. The only thing remaining to check is part~\ref{def:regular-bimodule_collapse} of
   Definition~\ref{def:regular_equipment}.

   Suppose we have a cartesian cell of the form
   \begin{equation*}
      \fig{60}
   \end{equation*}
   Let $M=\ker(f)$ and $N=\ker(g)$, and without loss of generality let $d=\Col{M}$ and $f=i_M$, and
   similarly for $d'$ and $g$. For any vertical arrows $h\colon\Col{M}\to x$ and $h'\colon\Col{N}\to
   x'$ and any proarrow $P\colon x\tickar x'$, we have a string of bijections
   \begin{align*}
      {}_{hi_M}\Emb_{h'i_N}(B(i_N,i_M),P)
      & \iso {}_M\Bimod_N\left(B(i_N,i_M),P(h'i_N,hi_M)\right) \\
         & \iso \dcat{D}(B,P(h',h)) \\
         & \iso \tensor[_h]{\dcat{D}}{_{h'}}(B,P)
   \end{align*}
   where the first is by Lemma~\ref{lem:restriction_bimodule}, the second is the second condition of
   the proposition, and the third is the definition of restriction. This shows that $B$ is the
   collapse of $B(i_N,i_M)=B(g,f)$.

   Conversely, suppose $\dcat{D}$ is exact. By assumption, any monoid $M\colon c\tickar c$ is
   effective, hence $M$ has a collapse because $\dcat{D}$ is regular, and $\vec{\imath}_M$ is
   cartesian by Corollary~\ref{cor:collapse_is_cartesian}.

   For the second condition, because $\dcat{D}$ is regular we already know from
   Proposition~\ref{prop:regular_alt_axiom} that the restriction functor
   $\HHor(\dcat{D})(\Col{M},\Col{N})\to{}_M\Bimod_N$ is fully faithful. To see that it is
   essentially surjective, let $B\in{}_M\Bimod_N$ be a bimodule. Any $(M,N)$-bimodule is effective,
   so $B$ has a collapse $\Col{B}\colon\Col{M}\tickar\Col{N}$ by
   Proposition~\ref{prop:effective_bimod_has_collapse}, and moreover the embedding $\vec{\imath}_B$
   is cartesian.  Hence $B\iso{}_{i_M}\Res_{i_N}(\Col{B})$.
\end{proof}

In~\cite{Wood:1982a,Wood:1985a}, proarrow equipments are introduced as a proposed setting for formal
category theory. There the structure of a proarrow equipment was presented in terms of an
identity-on-objects pseudo 2-functor $(\textrm{--})_*\colon\ccat{K}\to\ccat{M}$ between bicategories.
In~\cite{Shulman:2008a} it is proven that an equipment (there called a framed bicategory), can be
equivalently defined to be a pseudo 2-functor $\overbar{(\textrm{--})}\colon\ccat{K}\to\ccat{M}$,
where $\ccat{K}$ is a strict 2-category and $\ccat{M}$ is a bicategory with the same objects,
$\overbar{(\textrm{--})}$ is the identity on objects and locally fullly-faithful, and such that for
every arrow $f$ in $\ccat{K}$, $\overbar{f}$ has a right adjoint $\widetilde{f}$ in $\ccat{M}$. This
is equivalent to Wood's definition, except that $\ccat{K}$ is required to be a strict 2-category.

If $\dcat{D}$ is a framed bicategory, then the corresponding proarrow equipment has
$\ccat{K}=\VVer(\dcat{D})$ the vertical 2-category and $\ccat{M}=\HHor(\dcat{D})$ the horizontal
bicategory of $\dcat{D}$, while for any vertical arrow $f\colon c\to d$, $\overbar{f}=d(1,f)\colon
\tickar d$ is the representable proarrow, which has a right adjoint $\widetilde{f}=d(f,1)$.

However, in~\cite{Wood:1985a} two more axioms are proposed to support the development of formal
category theory. The first of these concerns coproducts, which we will not be considering in this
paper. The second, there called Axiom 5, concerns Kleisli objects for monads in $\ccat{M}$.

Recall that given a monad $M\colon a\to a$ in a bicategory $\ccat{B}$, a \emph{left $M$-module} is
an arrow $X\colon a\to b$ together with an action $X\circ M\Rightarrow X$ satisfying the usual
axioms for monoid action. A homomorphism of left $M$-modules from $X\colon a\to b$ to $X'\colon
a\to b'$ is an arrow $f\colon b\to b'$ and a 2-cell $f\circ X\Rightarrow X'$ which respects the $M$
actions in the obvious way.  This defines for any monad $M$ a functor $\LMod(\textrm{--},M)$ taking
an object $b$ to the category of left $M$-modules $a\to b$. The \emph{Kleisli object} for $M$ is
then defined to be an object $a_M$ which represents this functor, i.e.~equipped with a natural
equivalence $\ccat{B}(a_M,b)\iso\LMod(b,M)$. We will refer to the left $M$-module $a\to a_M$
corresponding to the identity on $a_M$ as the \emph{universal left $M$-module}.

Dually, the \emph{Eilenberg-Moore} object, or EM object, $a^M$ of $M$ is a representing object for
the functor $\RMod(\textrm{--},M)$ sending an object $b$ to the category of right $M$-modules $b\to
a$.

If $(\textrm{--})_*\colon\ccat{K}\to\ccat{M}$ is a proarrow equipment, then Wood's Axiom 5 requires
that every monad $M\colon a\to a$ in $\ccat{M}$ has a representable Kleisli object $(i_M)_*\colon
a\to a_M$ such that the adjoint $(i_M)^*\colon a_M\to a$ is an EM object for $M$, and such that a
composition $f\circ(i_M)_*$ is representable if and only if $f$ is.

We will give a slightly strictified version of this axiom, which is appropriate when assuming
$\ccat{K}$ is a strict 2-category, and then show that this is equivalent to exactness as we defined
above. But first we will prepare with a lemma to help translate between the double category
formalism and the proarrow equipment formalism

\begin{lemma}\label{lem:framed_bicat_monoid_actions}
   Let $M\colon c\tickar c$ be a monoid in an equipment $\dcat{D}$. For any 2-cell of the
   form
   \begin{equation}\label{eq:framed_bicat_monoid_actions}
      \fig{61}
   \end{equation}
   the following are equivalent:
   \begin{compactitem}
      \item $(f,\vec{f})$ is an embedding,
      \item the corresponding 2-cell $\vec{f}_*\colon M\odot d(1,f)\Rightarrow d(1,f)$ in
         $\HHor(\dcat{D})$ is a left $M$-action,
      \item the corresponding 2-cell $\vec{f}^*\colon d(f,1)\odot M\Rightarrow d(f,1)$ in
         $\HHor(\dcat{D})$ is a right $M$-action.
   \end{compactitem}
\end{lemma}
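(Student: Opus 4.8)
The plan is to reduce the whole lemma to Proposition~\ref{prop:equipment_2-cell_bijection}, which lets us trade the frames of a cell for representable proarrows; once the three kinds of data have been identified as representing the same cell, the only remaining content is to match axioms. Since $\dcat{D}$ is an equipment it has all composites, so every composite appearing in the statement makes sense. Let $\mathrm{cart}\colon d(1,f)\to U_d$ be the cartesian cell exhibiting $d(1,f)=U_d(1,f)$, which has left frame $f\colon c\to d$ and right frame $\id_d$, and dually let $\mathrm{cart}'\colon d(f,1)\to U_d$ be the cartesian cell exhibiting $d(f,1)=U_d(f,1)$. Given $\vec{f}\in{}_f\dcat{D}_f(M;U_d)$, paste it with $\mathrm{cart}$ along the canonical multiplication $U_d\odot U_d\cong U_d$ of the trivial monoid on $d$ to get a cell with source $M, d(1,f)$, target $U_d$, left frame $f$, and right frame $\id_d$, and transpose this back across $\mathrm{cart}$ to obtain a cell $\vec{f}_*\colon M\odot d(1,f)\Rightarrow d(1,f)$ in $\HHor(\dcat{D})$. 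Because $\mathrm{cart}$ is cartesian and $U_d\odot U_d\cong U_d$ is invertible, the assignment $\vec{f}\mapsto\vec{f}_*$ is a bijection --- this is precisely Proposition~\ref{prop:equipment_2-cell_bijection} in this special case. The same construction with $\mathrm{cart}'$ in place of $\mathrm{cart}$ produces a bijection $\vec{f}\mapsto\vec{f}^*$ onto cells $d(f,1)\odot M\Rightarrow d(f,1)$.

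It then remains to see that these bijections carry the embedding condition on $(f,\vec{f}\,)$ to the action conditions on $\vec{f}_*$ and $\vec{f}^*$. By Definition~\ref{def:embedding}, $(f,\vec{f}\,)$ is an embedding iff $\vec{f}$ respects the unit and multiplication of $M$, i.e.\ $\vec{f}\circ\eta_M$ is the nullary cell $(\,)\Rightarrow U_d$ induced by $f$, and $\vec{f}\circ\mu_M=\mu_{U_d}\circ(\vec{f},\vec{f}\,)$, where $\eta_M,\mu_M$ are the unit and multiplication of $M$ and $\mu_{U_d}$ is the multiplication of the trivial monoid. The claim is that under $\vec{f}\mapsto\vec{f}_*$ these become exactly the two left $M$-module axioms for $\vec{f}_*$ on $d(1,f)$: that the composite $d(1,f)\xrightarrow{\cong}U_c\odot d(1,f)\xrightarrow{\eta_M\odot 1}M\odot d(1,f)\xrightarrow{\vec{f}_*}d(1,f)$ is the identity, and that $\vec{f}_*\circ(\mu_M\odot 1)=\vec{f}_*\circ(1\odot\vec{f}_*)$ as cells $M\odot M\odot d(1,f)\Rightarrow d(1,f)$. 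Since $\vec{f}\mapsto\vec{f}_*$ is built from composition with the fixed cells $\mathrm{cart}$ and the trivial-monoid structure followed by transposition across $\mathrm{cart}$, and all of these operations commute with further pasting --- by the associativity and unit laws for cells in a virtual double category together with the universal properties of $U_d$ and of the cartesian cell $\mathrm{cart}$ --- composing both sides of each homomorphism equation with these same fixed cells turns it into the corresponding module equation. The equivalence of the first and third bullets is obtained symmetrically, interchanging the roles of $d(1,f)$ and $d(f,1)$ (equivalently, running the same argument in the horizontal dual of $\dcat{D}$, in which left modules become right modules).

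The one point that takes real work is the diagram-chase in the second paragraph: verifying that the bijections of the first paragraph send the two monoid-homomorphism equations precisely to the two module-action equations. This is routine, the only subtlety being bookkeeping with the left and right frames --- at each pasting step one must confirm that the frame $f$ lands on the side claimed --- and this is exactly the bookkeeping that Proposition~\ref{prop:equipment_2-cell_bijection} is designed to make mechanical. Accordingly, in the write-up I would cite that proposition for the bijections, note that the trivial-monoid structure on $U_d$ is the one induced by the unit proarrow, and then check the axiom correspondence by a short explicit computation.
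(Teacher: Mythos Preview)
The paper does not give a proof of this lemma: it is stated and immediately followed by Definition~\ref{def:Woods_axiom}, so it is implicitly left as a routine verification. Your approach via Proposition~\ref{prop:equipment_2-cell_bijection} is correct and is exactly the natural way to fill in the details; the bijection you describe is the intended meaning of ``the corresponding 2-cell'' in the statement, and the axiom-matching is the straightforward diagram chase you outline.
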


\begin{definition}\label{def:Woods_axiom}
   Let $\dcat{D}$ be an equipment. Say that $\dcat{D}$ \emph{satisfies Wood's axiom 5} if,
   for every monoid $M\colon c\tickar c$, there is an object $c_M$, vertical arrow $i\colon
   c\to c_M$, and 2-cell
   \begin{equation}\label{eq:Woods_axiom}
      \fig{62}
   \end{equation}
   such that
   \begin{compactitem}
      \item the corresponding 2-cell $\vec{\imath}_*\colon
         M\odot c_M(1,i)\Rightarrow c_M(1,i)$ in $\HHor(\dcat{D})$ is a universal left
         $M$-module,
      \item the corresponding 2-cell $\vec{\imath}\,^*\colon c_M(i,1)\odot
         M\Rightarrow c_M(i,1)$ in $\HHor(\dcat{D})$ is a universal right $M$-module, and
      \item any proarrow $P\colon c_M\tickar d$ is representable if (and only if)
         $c_M(1,i)\odot P$ is. Moreover, if the latter is represented by $g\colon c\to d$, then
         $P$ is representable by some $f$ such that $f\circ i=g$ (an equality, not just an
         isomorphism).
   \end{compactitem}
\end{definition}

\begin{lemma}\label{lem:bicat_EM_Kleisli}
   For any bicategory $\ccat{B}$ the following are equivalent:
   \begin{compactenum}
      \item Every monad $M\colon c\to c$ in $\ccat{B}$ has an object $c_M$ which is both the
         Kleisli object and EM object for $M$.
      \item Every monad $M\colon c\to c$ in $\ccat{B}$ factors as an adjunction $i_M\dashv i^M$,
         $M\iso i^M\circ i_M\colon c\to c_M\to c$, such that for every pair $M,N$ of monads the
         induced functor
         \begin{equation}
            \fig{63}
         \end{equation}
         is an equivalence of categories.
   \end{compactenum}
\end{lemma}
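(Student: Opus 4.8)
I would prove the two implications separately. The easy direction is $2\Rightarrow 1$, where the trick is to feed the equivalence hypothesis \emph{identity monads}; the harder direction is $1\Rightarrow 2$, where the point is to manufacture the adjunction from the Kleisli and Eilenberg--Moore (EM) universal cells and then recognize the comparison functor as an iterated universal property.

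\textbf{$2\Rightarrow 1$.} For any object $b$, the identity monad $\id_b\colon b\to b$ has $\LMod(-,\id_b)=\ccat{B}(b,-)$ and $\RMod(-,\id_b)=\ccat{B}(-,b)$, and the trivial factorization $\id_b\dashv\id_b$, $\id_b\iso\id_b\circ\id_b$ satisfies the conditions. Applying the equivalence hypothesis to the pair $(\id_b,\id_b)$ and to the unit/counit of $i_{\id_b}\dashv i^{\id_b}$ (an equivalence reflects isomorphisms) forces $i_{\id_b}\dashv i^{\id_b}$ to be an adjoint equivalence, so $b_{\id_b}\equiv b$ and we may take $b_{\id_b}=b$. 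Now fix a monad $M\colon c\to c$ and its data $i_M\colon c\to c_M$, $i^M\colon c_M\to c$. Specializing the induced-functor equivalence to the pair $(M,\id_b)$ gives, naturally in $b$,
\[
  \ccat{B}(c_M,b)\ \equiv\ {}_M\Bimod_{\id_b}\ =\ \LMod(b,M),
\]
so $c_M$ is a Kleisli object for $M$; specializing to $(\id_b,M)$ gives $\ccat{B}(b,c_M)\equiv\RMod(b,M)$, so $c_M$ is also an EM object. One checks that under these equivalences the universal left and right $M$-modules are the images of $\id_{c_M}$, namely $i_M$ and $i^M$.

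\textbf{$1\Rightarrow 2$: the adjunction.} Let $c_M$ be the common Kleisli/EM object, with $i_M\colon c\to c_M$ the universal left $M$-module and $i^M\colon c_M\to c$ the universal right $M$-module. I would first establish $i_M\dashv i^M$ together with $M\iso i^M\circ i_M$. The counit $\epsilon\colon i_M\circ i^M\Rightarrow\id_{c_M}$ is the $2$-cell corresponding, under the Kleisli identification $\ccat{B}(c_M,c_M)\equiv\LMod(c_M,M)$, to the left-module map from the free left $M$-module on $i_M$ down to $i_M$ itself (i.e.\ to the universal left action); the unit $\id_c\Rightarrow i^M\circ i_M$ is the monad unit, once $M\iso i^M\circ i_M$ is known. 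The triangle identities then unwind to the unit and associativity axioms for the universal left module $i_M$ and for the monad $M$. The identification $M\iso i^M\circ i_M$ --- equivalently, the statement that the universal right module $i^M$ is the $1$-cell classifying (via the Kleisli property) the free left module $(M,\mu)$, equivalently that the canonical comparison $c_M\to c_M$ between the Kleisli and EM structures is invertible --- is the one place where both universal properties are used together; I would verify it by chasing the defining bijections (or, more briefly, invoke the formal theory of monads: an EM object is a terminal resolution of $M$ and a Kleisli object an initial one, and an object carrying both structures makes the comparison an equivalence, whence composing the EM adjunction with its inverse yields a resolution whose left leg is $i_M$ and whose monad is $i^M\circ i_M\iso M$). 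I expect this to be the main obstacle; everything else is formal.

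\textbf{$1\Rightarrow 2$: the comparison functor.} It remains to see that for monads $M\colon c\to c$, $N\colon d\to d$ the functor $\ccat{B}(c_M,d_N)\to{}_M\Bimod_N$, $X\mapsto i^N\circ X\circ i_M$, is an equivalence. I would factor it as
\[
  \ccat{B}(c_M,d_N)\ \xrightarrow{\ X\mapsto X\circ i_M\ }\ \LMod(d_N,M)\ \xrightarrow{\ Z\mapsto i^N\circ Z\ }\ {}_M\Bimod_N,
\]
where the first arrow is an equivalence by the Kleisli universal property of $c_M$. For the second arrow, after forgetting the left $M$-action it is precisely the equivalence $\ccat{B}(c,d_N)\equiv\RMod(c,N)$ exhibiting $d_N$ as the EM object of $N$ via the universal right module $i^N$; since precomposition with $M$ commutes with this equivalence, a left $M$-action on $Z\in\ccat{B}(c,d_N)$ transports to a left $M$-action on the right $N$-module $i^N\circ Z$, i.e.\ to an $(M,N)$-bimodule structure, and a routine argument upgrades this to an equivalence of categories. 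Finally one checks the composite agrees on the nose with $X\mapsto i^N\circ X\circ i_M$, which is immediate from the formulas. Thus the only genuinely non-formal step is the middle one of $1\Rightarrow 2$, the identification $M\iso i^M\circ i_M$ (invertibility of the Kleisli-to-EM comparison).
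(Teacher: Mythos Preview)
Your proposal is correct and follows essentially the same route as the paper. Both directions are argued the same way: for $2\Rightarrow 1$ you specialize the equivalence hypothesis to pairs involving an identity monad (the paper simply writes $\LMod(b,M)\iso{}_M\Bimod_{1_b}$ and applies the hypothesis), and for $1\Rightarrow 2$ you build the adjunction from the universal left and right modules and then factor the comparison functor through $\LMod(d_N,M)$ using the Kleisli property on one side and the EM property on the other, exactly as the paper does.

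Two minor remarks. First, you are more careful than the paper in the $2\Rightarrow 1$ direction: the hypothesis only gives $\ccat{B}(c_M,b_{\id_b})\equiv\LMod(b,M)$, and you correctly note that one must first show $b_{\id_b}\equiv b$ by feeding the equivalence the counit of $i_{\id_b}\dashv i^{\id_b}$; the paper glosses over this. Second, for the adjunction $i_M\dashv i^M$ and the identification $M\iso i^M\circ i_M$, the paper simply invokes Street's formal theory of monads (factoring the unit module through the universal one gives an adjunction generating $M$, and uniqueness of adjoints matches the two factorizations), which is a cleaner packaging of the step you flag as ``the main obstacle''. Your sketch there is correct but could be tightened by citing that standard fact directly.
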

\begin{proof}
   ($1\Rightarrow 2$): Let $M$ be a monad with universal right $M$-module $i^M\colon c_M\to c$
   and universal left $M$-module $i_M\colon c\to c_M$. It is a standard fact from bicategory
   theory (see e.g.~\cite{Street:1972a}) that by factoring the unit right $M$-module $M$ through the
   universal one, $M\iso i^M\circ\alpha$, we get an adjunction $\alpha\dashv i^M$ such that $M$ is
   the monad induced by the adjunction. If we similarly factor the unit left $M$-module,
   $M\iso\beta\circ i_M$, we get an adjunction $i_M\dashv \beta$. It follows that $\alpha\iso i_M$,
   $\beta\iso i^M$, and $M\iso i^M\circ i_M$.

   To see the equivalence of categories, we only need to note that composition with $i^N$ induces an
   equivalence $\LMod(c_N,M)\iso{}_N\Bimod_M$, and likewise for $i_M$. This is a straightforward
   check which we leave to the reader. Thus each functor in
   \begin{equation*}
      \fig{64}
   \end{equation*}
   is an equivalence.

   ($2\Rightarrow 1$): Let $M\colon c\to c$ be a monad. To see that $i_M$ is a universal left
   $M$-module, simply notice that $\LMod(b,M)\iso{}_M\Bimod_{1_b}$. Thus $i_M$ is a universal left
   $M$-module because
   \begin{equation*}
      \fig{65}
   \end{equation*}
   is an equivalence of categories. Likewise we can see that $i^M$ is a universal right $M$-module.
\end{proof}

% \begin{lemma}\label{lem:framed_bicat_bimodules}
%    Let $M\colon c\tickar c$ and $N\colon c'\tickar c'$ be monoids in a framed bicategory $\dcat{D}$,
%    $B\colon c\tickar c'$ be a $(M,N)$-bimodule, and let $f\colon M\to d$ and $g\colon N\to d'$ be
%    embeddings. For any 2-cell of the form
%    \begin{equation}\label{eq:framed_bicat_bimodules}
%       \begin{tikzcd}
%          c \ar[r,tick,"B" domA] \ar[d,"f"']
%             & c' \ar[d,"g"] \\
%          d \ar[r,tick,"X"' codA]
%             & d'
%          \twocellA{\phi}
%       \end{tikzcd}
%    \end{equation}
%    the following are equivalent:
%    \begin{compactitem}
%       \item $\tensor[_f]{\phi}{_g}$ is an embedding $\tensor[_M]{B}{_N}\to X$,
%       \item the corresponding 2-cell $B\Rightarrow d(1,f)\odot X\odot d'(g,1)$ is a homomorphism of
%          $(M,N)$-bimodules in $\HHor(\dcat{D})$.
%    \end{compactitem}
% \end{lemma}

\begin{theorem}
   A framed bicategory $\dcat{D}$ is exact if and only if it satisfies Wood's axiom 5.
\end{theorem}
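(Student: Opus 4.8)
The plan is to route everything through Proposition~\ref{prop:exact_characterization}, which reformulates exactness as two conditions --- (A) every monoid $M\colon c\tickar c$ has a collapse $(i_M,\vec{\imath}_M)\colon(c,M)\to\Col{M}$ with $\vec{\imath}_M$ cartesian, and (B) every restriction functor ${}_{i_M}\Res_{i_N}\colon\HHor(\dcat{D})(\Col{M},\Col{N})\to{}_M\Bimod_N$ is an equivalence --- and to match both of these, together with Wood's axiom~5, against the horizontal bicategory $\ccat{M}=\HHor(\dcat{D})$. The dictionary I will use: a monoid $M$ in $\dcat{D}$ is exactly a monad on $c$ in $\ccat{M}$; by Lemma~\ref{lem:framed_bicat_monoid_actions} an embedding $(f,\vec{f})\colon M\to x$ is the same datum as a left $M$-module structure on the representable $x(1,f)$, equivalently a right $M$-module structure on $x(f,1)$; in a framed bicategory $f\mapsto x(1,f)$ is fully faithful onto the representable proarrows, with $x(1,f)\dashv x(f,1)$; and the trivial monoid $U_x$ has collapse $x$ with $i_{U_x}=\id_x$, so ${}_M\Bimod_{U_x}=\LMod(x,M)$ and ${}_{U_x}\Bimod_M=\RMod(x,M)$.

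Suppose first $\dcat{D}$ is exact, and fix a monoid $M$; I take the data of Definition~\ref{def:Woods_axiom} to be $c_M=\Col{M}$, $i_M$, $\vec{\imath}_M$. Specializing (B) to $N=U_x$ (resp.\ to $M=U_x$) exhibits $\Col{M}$ as the Kleisli object (resp.\ EM object) of $M$, with universal left module $\Col{M}(1,i_M)$ (resp.\ universal right module $\Col{M}(i_M,1)$); by Lemma~\ref{lem:framed_bicat_monoid_actions} these correspond to $\vec{\imath}_{M*}$ and $\vec{\imath}_M{}^{*}$, giving the first two clauses of Wood's axiom. For the representability clause, one direction and the compatibility of representing arrows are immediate from $d(1,f)(1,i_M)\iso d(1,f\circ i_M)$. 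For the other direction, if $\Col{M}(1,i_M)\odot P\iso P(1,i_M)$ is representable, it inherits a left $M$-action from the action on $\Col{M}(1,i_M)$; writing $P(1,i_M)\iso d(1,g)$, Lemma~\ref{lem:framed_bicat_monoid_actions} makes $g$ an embedding $M\to d$, which the collapse factors as $f\circ i_M=g$; since $i_M$ is a regular cover (its kernel is $M$ by Corollary~\ref{cor:collapse_is_cartesian} and its collapse cell is normal by Lemma~\ref{lem:collapse_normal_cartesian}), Proposition~\ref{prop:regular_alt_axiom} makes ${}_{i_M}\Res_{\id}$ fully faithful, so the $M$-module isomorphism $d(1,f)(1,i_M)\iso P(1,i_M)$ lifts to $d(1,f)\iso P$.

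Conversely, suppose $\dcat{D}$ satisfies Wood's axiom~5, and fix $M$ with data $c_M$, $i$, $\vec{\imath}$. I first check $(c_M,i,\vec{\imath})$ is a collapse of $M$: the Kleisli universal property gives $\HHor(\dcat{D})(c_M,x)\simeq\LMod(x,M)$, and composing it with the representability clause --- under which a proarrow $c_M\tickar x$ is representable iff its associated left $M$-module is, compatibly with representing arrows --- plus Lemma~\ref{lem:framed_bicat_monoid_actions} and full faithfulness of representables, yields a natural bijection $\dcat{D}_0(c_M,x)\iso\Emb(M,x)$ realized by precomposition with $(i,\vec{\imath})$. Next, $\vec{\imath}$ is cartesian: the framed-bicategory adjunction $c_M(1,i)\dashv c_M(i,1)$, together with these being the universal left and right $M$-modules, forces --- by the uniqueness-of-adjoints argument used in the proof of Lemma~\ref{lem:bicat_EM_Kleisli} --- the monad induced by that adjunction to be $M$, so that $\vec{\imath}$ exhibits $M$ as the restriction $c_M(i,i)$ and is therefore cartesian; thus (A) holds. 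Finally, Wood's axiom says every monad in $\ccat{M}$ has an object that is both its Kleisli and EM object, so Lemma~\ref{lem:bicat_EM_Kleisli} makes the associated functor an equivalence for every pair $M,N$; since $P\mapsto c_M(1,i_M)\odot P\odot c_N(i_N,1)\iso P(i_N,i_M)$ this functor is exactly ${}_{i_M}\Res_{i_N}$, so (B) holds and Proposition~\ref{prop:exact_characterization} gives exactness.

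The main obstacle is the bookkeeping needed to align the collapse/restriction language of Proposition~\ref{prop:exact_characterization} with the Kleisli-object/EM-object/universal-module language of Wood's axiom and Lemma~\ref{lem:bicat_EM_Kleisli}: concretely, (i) verifying that the representability clause is precisely what upgrades the Kleisli equivalence $\HHor(\dcat{D})(c_M,x)\simeq\LMod(x,M)$ to the collapse bijection $\dcat{D}_0(c_M,x)\iso\Emb(M,x)$, and (ii) identifying the monad induced by the canonical adjunction $c_M(1,i)\dashv c_M(i,1)$ with the original $M$ compatibly with the $2$-cell $\vec{\imath}$, so that $\vec{\imath}$ is the cartesian cell of the restriction $c_M(i,i)$. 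Both points are soft but require attention at the level of $2$-cells rather than merely the underlying proarrows.
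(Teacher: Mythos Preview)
Your approach is essentially the paper's own: both directions are routed through Proposition~\ref{prop:exact_characterization} and Lemma~\ref{lem:bicat_EM_Kleisli}, with Lemma~\ref{lem:framed_bicat_monoid_actions} supplying the dictionary between embeddings and module structures on representables. The only noticeable variation is in the representability clause of the ``exact $\Rightarrow$ axiom~5'' direction: you lift the isomorphism $d(1,f)(1,i_M)\iso P(1,i_M)$ using full faithfulness of ${}_{i_M}\Res_{\id}$ (Proposition~\ref{prop:regular_alt_axiom}), whereas the paper instead invokes the Kleisli universal property of $\Col{M}(1,i_M)$ directly. Both finish the job; your route makes slightly heavier use of the regularity machinery already established, while the paper's stays purely in the horizontal bicategory.
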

\begin{proof}
   \textbf{Axiom 5 $\Rightarrow$ exact:} We will use Proposition~\ref{prop:exact_characterization}
   to show that $\dcat{D}$ is exact.

   Let $M\colon c\tickar c$ be a monoid in $\dcat{D}$, and let $(i,\vec{\imath}\,)\colon
   M\to c_M$ be the embedding in~\eqref{eq:Woods_axiom}. We wish to show that this is a collapse
   cell, hence $c_M\iso\Col{M}$.

   Let $(f,\vec{f}\,)\colon M\to x$ be any other embedding. By
   Lemma~\ref{lem:framed_bicat_monoid_actions}, this makes $d(1,f)$ a left $M$-module, hence there
   is a unique-up-to-isomorphism proarrow $X\colon c_M\tickar d$ such that $c_M(1,i)\odot
   X\iso d(1,f)$ and $\vec{\imath}_*\odot X=\vec{f}_*$. By Definition~\ref{def:Woods_axiom}, $X$ is
   representable by a unique $\tilde{f}$ such that $\tilde{f}\circ i=f$. Finally, under the bijection
   of Proposition~\ref{prop:equipment_2-cell_bijection}, the equation $\vec{\imath}_*\odot
   X=\vec{f}_*$ becomes $\tilde{f}\circ\vec{\imath}=\vec{f}$. Thus any embedding $(f,\vec{f})$
   factors uniquely through $(i,\vec{\imath}\,)$, making $c_M$ the collapse of $M$. Moreover,
   the collapse cell $\vec{\imath}$ is cartesian, corresponding to the canonical isomorphsim
   $M\iso c_M(1,i)\odot c_M(i,1)$ from Lemma~\ref{lem:bicat_EM_Kleisli}.

   Thus we have shown the first condition of Proposition~\ref{prop:exact_characterization}, and the
   second follows directly from Lemma~\ref{lem:bicat_EM_Kleisli}, hence $\dcat{D}$ is exact.

   \textbf{Exact $\Rightarrow$ axiom 5:} For every monoid $M$ we will take the
   2-cell~\eqref{eq:Woods_axiom} to be the collapse cell of $M$. That $\Col{M}$ is both the Kleisli
   and the EM object for $M$ in $\HHor(\dcat{D})$ follows from Lemma~\ref{lem:bicat_EM_Kleisli} and
   Proposition~\ref{prop:exact_characterization}.

   For the last bullet of Definition~\ref{def:Woods_axiom}, let $X\colon\Col{M}\tickar d$ be a
   proarrow, and suppose that $\Col{M}(1,i)\odot X\iso d(1,g)$ for some $g\colon c\to d$. Then
   $d(1,g)$ is a left $M$-module, and by Lemma~\ref{lem:framed_bicat_monoid_actions} this left $M$
   action is equivalent to an embedding $(g,\vec{g})\colon M\to d$. Factoring this embedding through
   the collapse $g=\tilde{g}\circ i_M$ gives an isomorphsim $d(1,g)\iso\Col{M}(1,i)\odot
   d(1,\tilde{g})$ of left $M$-modules, and because $\Col{M}(1,i)$ is the universal left $M$-module,
   this implies $X\iso d(1,\tilde{g})$.
\end{proof}

\printbibliography
\end{document}